\setlist[enumerate]{label=({\arabic*})}
\newtheorem{defn}{Definition}[section]
\newtheorem{thm}[defn]{Theorem}
\newtheorem{exmp}[defn]{Example}
\newtheorem{lem}[defn]{Lemma}
\newtheorem{rmk}[defn]{Remark}
\newtheorem{prop}[defn]{Proposition}
\newtheorem{cor}[defn]{Corollary}
\newenvironment{manualtheorem}[1]{%
  \manualtheoreminner
}{\endmanualtheoreminner}
\newcommand\doublea
\newcommand\doubleb
 \newcommand\doublec
\newcommand{\N}{\mathbb{N}}
\newcommand{\R}{\mathbb{R}}
\newcommand{\Z}{\mathbb{Z}}
\newcommand{\B}{{\mathcal B}}
\DeclareMathOperator*\lowlim{\underline{lim}}
\date{\empty}
\begin{document}
\title{$\mathbb R$- and $\mathbb C$-supercyclicity for some classes of operators}

\author{E. D'Aniello, M. Maiuriello}

\newcommand{\Addresses}{{
  \bigskip
  \footnotesize
  
    M.~Maiuriello,\\
  \textsc{Dipartimento di Scienze Umane,\\ Universit\`a degli Studi  ``Link Campus University",\\
  Via del Casale San Pio V, 44, 00165 Roma, ITALIA}\\
  \textit{E-mail address: \em m.maiuriello@unilink.it} \\
  
   E.~D'Aniello,\\
  \textsc{Dipartimento di Matematica e Fisica,\\ Universit\`a degli Studi della Campania ``Luigi Vanvitelli",\\
  Viale Lincoln n. 5, 81100 Caserta, ITALIA}\\
  \textit{E-mail address: \em emma.daniello@unicampania.it} 

}}

\maketitle

\begin{abstract}
In the present paper we investigate different variants of supercyclicity, precisely $\mathbb R^+$-, $\mathbb R$- and $\mathbb C$-supercyclicity in the context of composition operators. 
We characterize $\R$-supercyclic composition operators on $L^p$, $1 \leq p < \infty$. Then, we turn our attention to dissipative composition operators, and we show that $\R$- and 
$\mathbb C$-supercyclicity are equivalent notions in this setting and they have a ``shift-like'' characterization.
\end{abstract}

\let\thefootnote\relax\footnote{\date{\empty} \\
2010 {\em Mathematics Subject Classification:} Primary: 47B33  Secondary: 47A16, 47B37, 46B10, 91B55 \\
{\em Keywords:} Supercyclicity; Hypercyclicity;  Composition Operators; Koopman Operators; Weighted Shifts.}

\section{Introduction}

Nowadays, when considering the term ``chaos" in the context of dynamical systems and operator theory, the belief that it is intrinsically linked to non-linearity is outdated.
The investigations of one of its main ingredients, hypercyclicity, have thoroughly provided many examples, even quite natural, of linear dynamical systems exhibiting chaos. 
However, in the realm of finite-dimensional spaces the existence of a hypercyclic linear operator is precluded and, as initially observed by G. D. Birkhoff \cite{Birkhoff}, this effect 
only becomes visible when dealing with the infinite-dimensional case. Starting from the seemingly plain definition of a hypercyclic operator, that is, an operator with a dense orbit, 
the theory has developed into a very active research area leading, in addition, to the analysis of several related phenomena such as  supercyclicity, frequent hypercyclicity, topological mixing and Li-Yorke chaos. \\
Anyone with an even modest familiarity with linear dynamics is aware that these notions have been extensively studied, and sometimes characterized, in the large and versatile 
family of composition operators, known to be used in many applications like economic ones \cite{leventides2022analysis}, and in this context (more generally, in applied sciences) they are referred to as Koopman operators. This class includes the weighted shifts renowned for being an indispensable tool for constructing examples and understanding complex dynamics.\\
 Let us briefly recall the composition operators on $L^p(X)$, $1\leq p < \infty$. Given a $\sigma$-finite measure space $(X,{\mathcal B},\mu)$ and a bijective, bimeasurable 
 transformation $f:X \rightarrow X$ with both the Radon-Nikodym derivatives of $\mu \circ f$  and $\mu \circ f^{-1}$ with respect to $\mu$ bounded from below, then the operator 
 $T_f:L^p(X) \rightarrow L^p(X)$, defined by $T_f (\varphi) = \varphi \circ f $, is a well-defined bounded invertible linear operator on $L^p(X)$, known as {\em{composition operator}}. 
 The quintuple  $(X,{\mathcal B},\mu, f, T_f)$ is referred to as a {\em{composition dynamical system}}.
For a general composition dynamical system as above, necessary and sufficient conditions on the transformation $f$ that guarantee hypercyclicity and mixing for $T_f$ are 
provided in \cite{BADP} and, in \cite{BDP}, a characterization for $T_f$ being Li-Yorke chaotic is given. These are very general results and, as a specific case, when $X = \Z$ and 
$f$ is the $+1$-map, they yield well-known characterizations of hypercyclicity, mixing and Li-Yorke chaos for weighted backward shifts. We recall that, given a bounded sequence 
of complex numbers $\{w_i\}_{i \in \Z}$,  a {\em bilateral weighted backward shift with weights $\{w_i\}_{i \in \Z}$} is the bounded linear operator $B_w: \ell^p(\Z) \rightarrow \ell^p(\Z)$ 
defined by $B_w({\bold x})(i) = w_{i+1}x_{i+1}$ and, moreover, $B_w$ is invertible when $\{w_i\}_{i \in \Z}$ is bounded away from zero.
When considering ${\mathbb N}$ instead of $\Z$, the weighted backward shift is called {\em unilateral}. We refer the reader to the books \cite{BayartMatheron, GE} and 
to the more recent papers \cite{Bayartruzsa, CHARPENTIER20164443, DAnielloMaiurielloSpectrum, GrossePapathanasiou, Maiuriello}, where many additional references can be found.
In addition to the aforementioned characterizations, also supercyclicity, frequent hypercyclicity and chaos are completely understood for weighted shifts (see \cite{GE}), while, 
unfortunately, they have not yet been characterized for composition operators in the general context. To this aim, in \cite{DAnielloDarjiMaiuriello, Darjipires} the authors introduced
 important instruments and ideas that played a key role in addressing such a gap in the literature. More precisely, the authors introduced the notions of dissipativity and bounded 
 distortion in the context of composition operators, which occur, respectively, when there exists $W \in \B, 0 < \mu (W) < \infty$ such that
\[ X = \dot {\cup} _{k\in \Z}f^k (W) \tag{D} \]
and when, in addition, there exists $K>0$ such that
\[ \dfrac{1}{K} \mu(f^k(W))\mu(B) \leq \mu(f^k (B))\mu (W) \leq K \mu(f^k(W))\mu(B) \tag{BD} \]
for all $k \in \mathbb Z$ and  $B \subseteq W, \mu(B)>0$.
In \cite{Darjipires}, necessary and sufficient condition on $f$ to guarantee chaos and frequent hypercyclicity for dissipative $T_f$, with the bounded distortion, are provided. 
In the same context, not only chaotic properties but even hyperbolic ones, like generalized hyperbolicity and shadowing, are characterized in \cite{DAnielloDarjiMaiuriello}. After that, additional 
truths emerged in the dissipative context. Results from \cite{DAnielloDarjiMaiuriello2} reveal that, fixing $\nu, g$ and $\{w_i\}_{i \in \Z}$ properly, then the two systems $(\Z, {\mathcal{P}}(\Z) ,\nu, g, B_w)$ 
and $(X,{\mathcal B},\mu, f, T_f)$ are indistinguishable from a dynamical perspective: precisely, they share common dynamical properties including hypercyclicity, topological mixing, chaos, 
Li-Yorke chaos, (uniform) expansivity, shadowing and (generalized) hyperbolicity, and, for this reason, the authors coined the term ``shift-like'' referring to the behavior of composition operators in the dissipative setting.

From this overview, it emerges that the aforementioned results are not, as of today, yet known for the various variants of supercyclicity. 
Specifically, while $\mathbb C$-, $\R$- and $\R^+$-supercyclicity are characterized for weighted shifts, it seems not to be so yet for 
composition operators, neither in the general context nor in the dissipative one. Moreover, in the dissipative context, 
it is not yet established if this property falls among those shared by $B_w$ and $T_f$, i.e. if $T_f$ has a shifts-like behavior concerning supercyclicity. 
The purpose of this paper is to fix this gap in the literature. The concept of $\mathbb C$-supercyclic operators was introduced by Hilden and Wallen in \cite{hilden} and, since then, a multitude of variants have been studied. 
In \cite{herzog1992linear}, Herzog proved that every real or complex, separable, infinite dimensional Banach space supports a $\mathbb C$-supercyclic operator.  Section 2 recalls some preliminaries. In Section 3, we provide a very general characterization 
of $\R$-supercyclicity for composition operators. Section 4 focuses on the dissipative context: here, auxiliary results are presented that, together with the 
already known techniques from \cite{DAnielloDarjiMaiuriello2}, will enable us to prove the equivalence between $\R$- and $\mathbb C$-supercyclicity and, in addition, to show the shift-like 
behavior with respect to these properties.

\section{Preliminaries}
In the sequel $X$ will be a separable metric space.
We outline the general theory of cyclic, hypercyclic and supercyclic operators, recalling definitions and summarizing classical results from the literature.
 
\begin{defn}
Let $T : X \rightarrow X $ be an operator. A vector $x \in X$ is called 
\begin{itemize}
\item{{\em{cyclic}} for $T$ if the linear span of its orbit, i.e. $\text{span}\{T^nx ; n\geq 0\}$, is dense in $X$;}
\item{{\em{${\mathbb C}$-supercyclic}} (or, simply, {\em{supercyclic}}) for $T$ if its projective orbit, i.e. $\{ \lambda T^nx ; n \geq 0, \lambda \in \mathbb C\}$ is dense in $X$;}
\item{{\em{${\mathbb R}$-supercyclic}} for $T$ if $\{ \lambda T^nx ; n \geq 0, \lambda \in \mathbb R\}$ is dense in $X$;}
\item{{\em{${\mathbb R}^{+}$-supercyclic}} for $T$ if $\{ \lambda T^nx ; n \geq 0, \lambda \in {\mathbb R}^{+}\}$ is dense in $X$;}
\item{ {\em{hypercyclic}} for $T$ if the orbit itself, i.e. $ \{ T^nx ; n \geq 0 \}$ is dense in $X$.}
\end{itemize}
\end{defn}

Of course, these notions make sense only if the space $X$ is separable. Hence, as already mentioned above, from now on, $X$ is a separable Banach space 
and $T$ is a bounded linear operator from $X$ to itself. 
The set of cyclic, supercyclic and hypercyclic vectors for $T$ is denoted by $C(T), SC(T)$ and $HC(T)$, respectively. Operators admitting a cyclic, a supercyclic and 
a hypercyclic vector are called {\em{cyclic, supercyclic}} and {\em{hypercyclic operators}}, respectively. Clearly, supercyclicity falls between hypercyclicity and cyclicity, 
as the following diagram shows.\\

\vspace{0.03 cm}
\noindent
{\small{
\text{Hypercyclicity} \doublea \text{${\mathbb R}^{+}$-supercyclicity}  $\Leftrightarrow$ \text{${\mathbb R}$-supercyclicity}  \doubleb \text{Supercyclicity}  \doublec  \text{Cyclicity} \\
}}
\vspace{0.02 cm}

\noindent
Note that unilateral unweighted backward shifts are always cyclic \cite{halmos}, but they do not have supercyclic vectors \cite[page 564(b)]{hilden}, hence $(c)$ cannot be reverted. Moreover, 
regarding $(b)$, in \cite[Remark 2.1]{bermudez2002c} it is constructed a supercyclic operator not being $\R-$supercyclic, and, in \cite[Proposition 4.1]{Salas}, an example of a supercyclic 
weighted shift having no hypercyclic multiples (and, therefore, not hypercyclic \cite{S}) is provided, showing that the inverse of $(a)$ does not hold. The equivalence between $\R$- and 
$\R^+$-supercyclicity is showed in \cite[Theorem 2.1]{bermudez2002c}\\

The following well-known result was proved by Birkhoff in \cite{birkhoff1922}. 

\begin{manualtheorem}{Birkhoff Transitivity Theorem}
Let $T \in  L(X)$. The following are equivalent:
\begin{itemize}
\item[(i)] $T$ is hypercyclic;
\item[(ii)] $T$ is topologically transitive; that is, for each pair of non-empty open subsets $U,V$ of $X$ there exists $n\in \mathbb N$ such that $T^n(U)\cap V\neq \emptyset.$
\end{itemize}
\end{manualtheorem}

Next, we provide the analogue of the Birkhoff Transitivity Theorem for supercyclicity and ${\mathbb R}^+$-supercyclicity.

\begin{thm} \cite[Theorem 1.12]{BayartMatheron} \label{super} Let $T \in  L(X)$. The following are equivalent:
\begin{itemize}
\item[(i)] $T$ is $\mathbb C$-supercyclic;
\item[(ii)] for each pair of non-empty open subsets $U,V$ of $X$ there exists $n\in \mathbb N$ and $\lambda \in \mathbb C$ such that $\lambda T^n(U)\cap V\neq \emptyset.$
\end{itemize}
\end{thm}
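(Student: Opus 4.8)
The plan is to run a Baire category argument exactly parallel to the proof of the Birkhoff Transitivity Theorem, with the complex scalar multiples absorbed into the relevant open sets. Since $X$ is separable, I would first fix a countable base $\{V_k\}_{k\in\N}$ of nonempty open sets, and observe that condition (ii) holds for every pair $(U,V)$ if and only if it holds for every pair $(U,V_k)$: any nonempty open $V$ contains some $V_k$, and $\lambda T^n(U)\cap V_k\neq\emptyset$ forces $\lambda T^n(U)\cap V\neq\emptyset$. For each $k$ I would then set
\[ G_k=\{x\in X:\ \exists\, n\geq 0,\ \lambda\in\mathbb C,\ \lambda T^n x\in V_k\}. \]
A vector $x$ is $\mathbb C$-supercyclic precisely when its projective orbit meets every member of the base, so $SC(T)=\bigcap_k G_k$, and unwinding the definitions shows that $G_k$ is dense if and only if condition (ii) holds with $V=V_k$. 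Hence (ii) is equivalent to the assertion that every $G_k$ is dense.

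The next step is to verify that each $G_k$ is open. Writing $C_k=\{y\in X:\ \mathbb C y\cap V_k\neq\emptyset\}$ for the ``cone saturation'' of $V_k$, one has $G_k=\bigcup_{n\geq0}(T^n)^{-1}(C_k)$, so by continuity of the powers $T^n$ it suffices to see that $C_k$ is open. This is a short verification: if $\lambda_0 y_0\in V_k$ with $\lambda_0\neq0$, then $\lambda_0 y\in V_k$ for all $y$ near $y_0$, whereas if $0\in V_k$ then $C_k=X$; in either case $C_k$ is open. Granting this, each $G_k$ is open, and the implication (ii)$\Rightarrow$(i) follows at once from the Baire Category Theorem (applicable since $X$ is a Banach space): if all the $G_k$ are dense and open, then $SC(T)=\bigcap_k G_k$ is a dense $G_\delta$, in particular nonempty, so $T$ is $\mathbb C$-supercyclic.

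For the reverse implication (i)$\Rightarrow$(ii) I would first promote a single supercyclic vector to a dense supply of them. The key auxiliary fact is that a supercyclic operator has dense range: otherwise $\overline{\mathrm{ran}\,T}$ is a proper closed subspace, and the whole projective orbit of any $x$ would lie in $\mathbb C x\cup\overline{\mathrm{ran}\,T}$, a union of two nowhere dense sets, contradicting density. Using dense range together with the continuity of $T$, the tail $\{\lambda T^n x:n\geq1,\ \lambda\in\mathbb C\}$ of a dense projective orbit is again dense, whence $T^n x$ is supercyclic for every $n$; since nonzero scalar multiples of supercyclic vectors are plainly supercyclic, $SC(T)\supseteq\{\lambda T^n x:\lambda\neq0,\ n\geq0\}$, which is dense, being the dense projective orbit with the single (non-isolated) point $0$ removed. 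Condition (ii) is then immediate: given nonempty open $U,V$, choose a supercyclic vector $x\in U$; its projective orbit meets $V$, say $\lambda T^n x\in V$, and since $x\in U$ we obtain $\lambda T^n x\in \lambda T^n(U)\cap V$.

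I expect the main obstacle to be precisely this density of $SC(T)$ needed for (i)$\Rightarrow$(ii): unlike the hypercyclic case, where the tail of a dense orbit is trivially dense, here one genuinely relies on the dense-range lemma to know that $T^n x$ remains supercyclic. The openness of the cone saturation $C_k$, with its careful treatment of the value $\lambda=0$, is the other point demanding attention, though it is routine.
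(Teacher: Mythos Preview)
The paper does not actually prove this theorem; it is quoted from \cite{BayartMatheron}. The paper does, however, give a self-contained proof of the parallel statement for $\mathbb{R}$-supercyclicity (Theorem~\ref{R-super}), and it is against that argument that yours is naturally compared. For $(ii)\Rightarrow(i)$ the two approaches agree in substance: the paper runs an explicit nested-balls construction, which is the Baire Category Theorem argued by hand, while you invoke Baire directly after exhibiting the open dense sets $G_k$; both are correct and essentially equivalent. For $(i)\Rightarrow(ii)$ the routes genuinely diverge. You first establish that a supercyclic operator has dense range, deduce that $SC(T)$ is dense, and then select a supercyclic vector inside $U$. The paper's argument is more direct and needs no auxiliary lemma: using a single supercyclic vector $x$, it locates $\lambda T^{n}x\in U$ with $\lambda\neq 0$, then $\lambda' T^{n'}x\in V$ with $n'>n$, and concludes that $\frac{\lambda'}{\lambda}\,T^{\,n'-n}(U)$ meets $V$. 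Your detour buys the extra information that $SC(T)$ is a dense $G_\delta$; the paper's shortcut is leaner but delivers only the stated equivalence.
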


\begin{thm} \label{R-super}
Let $T \in  L(X)$. The following are equivalent:
\begin{itemize}
\item[(i)] $T$ is ${\mathbb R}$-supercyclic;
\item[(ii)] for each pair of non-empty open subsets $U,V$ of $X$ there exist $n \in \mathbb N$ and $\lambda \in {\mathbb R}^{+}$ such that $\lambda T^n(U)\cap V\neq \emptyset.$
\end{itemize}
\end{thm}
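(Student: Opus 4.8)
The plan is to mimic the proof of the Birkhoff Transitivity Theorem, adapting it to the projective-orbit setting with real scalars, and to exploit the already-stated equivalence between $\mathbb R$- and $\mathbb R^+$-supercyclicity. First I would prove the direction $(i)\Rightarrow(ii)$. Suppose $T$ is $\mathbb R$-supercyclic; by the equivalence recorded after the diagram (that is, \cite[Theorem 2.1]{bermudez2002c}), $T$ is also $\mathbb R^+$-supercyclic, so there is a vector $x$ whose orbit $\{\lambda T^n x : n\ge 0,\ \lambda\in\mathbb R^+\}$ is dense in $X$. Given non-empty open sets $U,V$, density gives $\mu>0$ and $m\ge 0$ with $\mu T^m x\in U$; since $\mu T^m x$ is itself a vector with dense projective ($\mathbb R^+$-)orbit — scaling by a positive constant and applying powers of $T$ preserves density of the $\mathbb R^+$-orbit — one finds $\nu>0$ and $k\ge 0$ with $\nu T^k(\mu T^m x)\in V$. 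Then with $y=\mu T^m x\in U$ and $\lambda=\nu>0$ we get $\lambda T^k y\in \lambda T^k(U)\cap V$, which is (ii). (Alternatively, and more directly: the set of $\mathbb R^+$-supercyclic vectors is dense, so $U$ contains such a vector $y$; then $\{\lambda T^n y\}$ is dense, hence meets $V$.)

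For the converse $(ii)\Rightarrow(i)$, I would run the standard Baire-category argument. Fix a countable base $\{V_j\}_{j\in\mathbb N}$ for the topology of $X$. For each $j$ set
\[
E_j \;=\; \bigcup_{n\ge 0}\ \bigcup_{\lambda\in\mathbb Q,\ \lambda>0}\ \lambda^{-1}\,T^{-n}(V_j)\,,
\]
so that a vector $x$ lies in $E_j$ exactly when some positive rational multiple of some iterate $T^n x$ falls in $V_j$. Each $E_j$ is open (preimages of open sets under the continuous maps $x\mapsto \lambda T^n x$), and I claim each $E_j$ is dense: given any non-empty open $U$, hypothesis (ii) yields $n$ and $\lambda\in\mathbb R^+$ with $\lambda T^n(U)\cap V_j\neq\emptyset$, i.e. some $u\in U$ has $\lambda T^n u\in V_j$; by continuity of $\lambda'\mapsto \lambda' T^n u$ and openness of $V_j$, nearby \emph{rational} positive $\lambda'$ also work, and by continuity of $u'\mapsto \lambda' T^n u'$ a whole neighbourhood of $u$ inside $U$ lands in $E_j$ — so $E_j\cap U\neq\emptyset$. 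By the Baire Category Theorem, $G=\bigcap_j E_j$ is a dense $G_\delta$; any $x\in G$ has the property that for every $j$ there are $n\ge 0$ and rational $\lambda>0$ with $\lambda T^n x\in V_j$, hence $\{\lambda T^n x:n\ge0,\ \lambda\in\mathbb R^+\}$ meets every basic open set, so it is dense, i.e. $x$ is $\mathbb R^+$-supercyclic and a fortiori $\mathbb R$-supercyclic. Thus $SC_{\mathbb R}(T)\supseteq G\neq\emptyset$.

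One technical point worth isolating is the reduction ``$\mathbb R$-transitivity'' as phrased in (ii) uses only positive $\lambda$, matching the $\mathbb R^+$-supercyclic conclusion; the passage back to genuine $\mathbb R$-supercyclicity is free since every $\mathbb R^+$-supercyclic vector is trivially $\mathbb R$-supercyclic, and conversely the equivalence of the two notions from \cite[Theorem 2.1]{bermudez2002c} is what lets us phrase (ii) with $\mathbb R^+$ in the first place. The main obstacle — really the only place requiring care — is verifying density of $E_j$: one must simultaneously perturb the scalar $\lambda$ to a rational and the point $u$ within $U$, which is routine from joint continuity of $(\lambda,u)\mapsto \lambda T^n u$ but must be written so that the chosen neighbourhood of $u$ stays inside $U$. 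Everything else is the verbatim Birkhoff/Baire template, so the proof is short.
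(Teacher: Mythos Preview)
Your argument is correct. The forward direction $(i)\Rightarrow(ii)$ is essentially the paper's proof: both pick a point of the dense $\mathbb R^+$-projective orbit inside $U$ and then push it forward into $V$; you phrase the intermediate step as ``$\mu T^m x$ is itself $\mathbb R^+$-supercyclic'', whereas the paper simply picks a second point of the same orbit in $V$ with a larger exponent and takes the quotient of scalars. These are the same idea.

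For $(ii)\Rightarrow(i)$ you take a genuinely different route. The paper runs a nested-balls construction: starting from an arbitrary open set $U$ and a countable base $\{B_k\}$, it inductively chooses balls $U_k\subseteq U_{k-1}$ of radius $<2^{-k}$ with $U_k\subseteq \lambda_k^{-1}T^{-n_k}(B_k)$, and the single point in $\bigcap_k \overline{U_k}$ is the desired $\mathbb R^+$-supercyclic vector. You instead invoke Baire directly, showing that each $E_j=\bigcup_{n,\lambda}\lambda^{-1}T^{-n}(V_j)$ is open and dense and taking their intersection. Both are standard templates; your version immediately yields that the $\mathbb R^+$-supercyclic vectors form a dense $G_\delta$, while the paper's nested-balls argument is slightly more constructive and avoids quoting Baire explicitly (though it is of course the same completeness ingredient underneath). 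Your detour through positive rationals is harmless but unnecessary: the uncountable union over all $\lambda>0$ is already open, and Baire is applied only to the countable family $\{E_j\}$.
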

\begin{proof}
$(i) \Rightarrow (ii)$. By hypothesis, there exists $x \in X$ such that $\{ \lambda T^nx ; n \geq 0, \lambda \in {\mathbb R}^{+}\}$ is dense in $X$. Note that we can consider 
$\lambda \in  {\mathbb R}^{+}$ by the above mentioned equivalence between $\R$- and 
$\R^+$-supercyclicity. Consider $U$ and $V$ non-empty open subsets of $X$. 
Note that, by the density of the orbit above, there exist $n \in \N$ and $\lambda \in \R^+$ such that $\lambda T^n(x) \in U$, and there exist $n' \in \N$, $n'> n$ and ${\lambda}' \in \R^+$ such 
that ${\lambda '} T^{n'}(x) \in V$. Therefore, $\frac{{\lambda '}}{\lambda} T^{n' - n}(U)$ intersects $V$.   \\
To show $(ii) \Rightarrow (i)$, consider, the space being separable, $\{B_k\}_{k \in \N}$ an enumeration of the open balls with rational radii and centered on a dense countable family. Let $U$ be 
any open set. By assumption, there exist $n_{1} \in \mathbb N$ and ${\lambda_{1}} \in {\mathbb R}^{+}$ such that $\frac{1}{{\lambda}_{1}} T^{-n_{1}}(B_{1})\cap U \neq \emptyset$. 
Since $\frac{1}{{\lambda}_{1}} T^{-n_{1}}(B_{1})\cap U$ is an open set, it must contain some open ball, let us call it $U_{1}$, of radius  less than $\frac{1}{2}$. By hypothesis again, 
there exist $n_{2} \in \mathbb N$ and ${\lambda_{2}} \in {\mathbb R}^{+}$ such that $\frac{1}{{\lambda}_{2}} T^{-n_{2}}(B_{2})\cap U_{1} \neq \emptyset$. Since 
$\frac{1}{{\lambda}_{2}} T^{-n_{2}}(B_{2})\cap U_{1}$ is an open set, it must contain some open ball, call it $U_{2}$, of radius less than $ \frac{1}{4}$, and so on. 
Hence, the intersection $\cap_{n \in \N} U_{n}$ is a single point $x$ satisfying ${\lambda}_{k}T^{n_{k}}(x) \in B_{k}$ for all $k \in \N$, meaning that $x$ is a ${\mathbb R}^{+}$-supercyclic vector for $T$.
\end{proof}

 The above two results, together with the following criteria, are the main tools used to show that an operator possesses certain forms of cyclicity.

\begin{manualtheorem}{Supercyclicity Criterion} \label{GS} Given the operator $T$, if there are dense subsets $X_0,Y_0 \subset X$, a sequence $\{n_k\} \rightarrow \infty$, and 
$S:Y_0 \rightarrow Y_0$ such that
\begin{itemize}
\item[(a)] if $x\in X_0$ and $y\in Y_0$, then $\Vert T^{n_k}x\Vert \Vert S^{n_k}y\Vert \rightarrow 0$ as $\{n_k\} \rightarrow \infty$
\item[(b)]  $TSy=y$, for any $y\in Y_0$
\end{itemize}
then $T$ is $\mathbb C$-supercyclic.
\end{manualtheorem}

\begin{manualtheorem}{Hypercyclicity Criterion} \label{GS} Given the operator $T$, if there are dense subsets $X_0,Y_0 \subset X$, a sequence $\{n_k\} \rightarrow \infty$, and $S:Y_0 \rightarrow Y_0$ such that
\begin{itemize}
\item[(a)] $T^{n_k}x\rightarrow 0$, for any $x\in X_0$
\item[(b)] $S^{n_k}y \rightarrow 0$, for any $y\in Y_0$
\item[(c)] $TSy=y$, for any $y\in Y_0$.
\end{itemize}
then $T$ is hypercyclic.
\end{manualtheorem}

The above recalled Supercyclicity Criterion was developed by Salas in \cite{Salas} and it is strictly more general than 
the one previously provided in \cite{Kitai}. The first version of the Hypercyclicity Criterion was given by Kitai in \cite{Kitai}, while the one above was independently rediscovered by Gethner and 
Shapiro in \cite{Shapiro}. For some operators, the above conditions are both necessary and sufficient: for instance, a bilateral weighted backward shift is hypercyclic if and only if it satisfies 
the Hypercyclicity Criterion  \cite{leon2001spectral} and the same holds for supercyclicity together with the Supercyclicity Criterion \cite[Corollary 3.2]{Salas}. In addition, since, in general, 
the Supercyclicity Criterion implies the $\R$-supercyclicity \cite[Remark 2.2]{bermudez2002c}, then it follows that for bilateral weighted shifts these three supercyclicity notions 
(${\mathbb R}^{+}$-, ${\mathbb R}$-  and  $\mathbb C$-supercyclicity) are equivalent. \\

We recall that two linear operators $T:X \rightarrow X$ and $S:Y \rightarrow Y$ are said to be {\em topologically semi-conjugate} if there exists a linear, bounded, surjective map $\Pi: X \rightarrow Y$, 
called a {\em factor map,} for which $\Pi \circ T=S\circ \Pi$. In such case, $S$ is called a {\em factor} of $T$. In particular, if $\Pi$ is a homeomorphism, then {\em topological conjugacy} is achieved, 
which is the standard equivalence used in dynamical systems theory to say that two operators have the same dynamics. It was already observed, and it is easy to verify, that cyclicity, hypercyclicity, 
both ${\mathbb R}$- and ${\mathbb C}$-supercyclicity are preserved by factors \cite{BayartMatheron, GE}, and we shall make extensive use of this in Section 4.

\section{$\mathbb R$-supercyclicity for composition operators}

Our first result is a characterization of ${\mathbb R}$-supercyclic composition operators in the general setting. The proof follows ideas given in \cite{Kalmes} and \cite[Theorem 1.1]{BADP} for the hypercyclic case. In the sequel, $X$  
is a locally compact, $\sigma$-compact and second countable Hausdorff space, ${\mathcal B}$ is the Borel $\sigma$-algebra over $X$, and $\mu$ is a locally finite Borel measure (and, therefore, it is $\sigma$-finite as $X$ is 
assumed to be $\sigma$-compact). We recall that, given a set $A$ and $\delta >0$,  $B_{\delta}(\chi_A)$ is the open ball centered in $\chi_A$ with radius $\delta$.
 
\begin{thm}[Characterization of $\R$-supercyclicity - General case] \label{generalcase}
Let $(X,{\mathcal B},\mu, f, T_f)$ be a composition dynamical system. The composition operator $T_f$ is ${\mathbb R}$-supercyclic if and only if for all $\epsilon >0$, for all $B \in \mathcal B$ of finite measure, there exist $B' \subseteq B$, $k \geq 1$ and $\lambda >0$, 
such that \[ \mu(B \setminus B')< \epsilon, \,\, \,\,  \mu(f^{k}(B'))<  \lambda^p\epsilon \text{  and  } \mu(f^{-k}(B'))< \lambda^{-p} \epsilon. \]
\end{thm}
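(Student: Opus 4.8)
The plan is to reduce the assertion to the transitivity‑type characterization of $\R$‑supercyclicity in Theorem~\ref{R-super}, and then to prove the two implications separately — the ``if'' part by an explicit construction, the ``only if'' part by an extraction argument — following the scheme of the hypercyclic characterizations of \cite{Kalmes} and \cite{BADP}. Since the finitely supported simple functions are dense in $L^p(X)$, it is enough to verify condition~(ii) of Theorem~\ref{R-super} for balls $U=B_\delta(u)$ and $V=B_\delta(v)$ with $u,v$ simple of finite‑measure support; for readability I describe everything for $u=\chi_A$ and $v=\chi_C$ with $\mu(A),\mu(C)<\infty$, the general case being the same up to constants depending on $\|u\|_\infty,\|v\|_\infty$.

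\emph{Sufficiency.} Fix such $U,V$ and a small $\eta>0$ to be chosen in terms of $\delta$. Applying the hypothesis to $B:=A\cup C$ produces $B'\subseteq B$, $k\geq 1$ and $\lambda>0$ with $\mu(B\setminus B')<\eta$, $\mu(f^k(B'))<\lambda^p\eta$ and $\mu(f^{-k}(B'))<\lambda^{-p}\eta$. Put $A':=A\cap B'$, $C':=C\cap B'$ and set
\[
\varphi:=\chi_{A'}+\lambda^{-1}\chi_{f^k(C')}.
\]
Since $f$ is bijective, $T_f^k\varphi=\varphi\circ f^k=\chi_{f^{-k}(A')}+\lambda^{-1}\chi_{C'}$, hence $\lambda T_f^k\varphi=\lambda\chi_{f^{-k}(A')}+\chi_{C'}$. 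Using $|a+b|^p\leq 2^{p-1}(|a|^p+|b|^p)$ together with $A'\subseteq A$, $C'\subseteq C$,
\[
\|\varphi-\chi_A\|_p^p\leq 2^{p-1}\bigl(\mu(A\setminus A')+\lambda^{-p}\mu(f^k(C'))\bigr),\qquad
\|\lambda T_f^k\varphi-\chi_C\|_p^p\leq 2^{p-1}\bigl(\mu(C\setminus C')+\lambda^{p}\mu(f^{-k}(A'))\bigr).
\]
Now $\mu(A\setminus A'),\mu(C\setminus C')\leq\mu(B\setminus B')<\eta$, and — this is precisely the role of the asymmetric weights $\lambda^{\pm p}$ in the statement — $\lambda^{-p}\mu(f^k(C'))\leq\lambda^{-p}\mu(f^k(B'))<\eta$ and $\lambda^{p}\mu(f^{-k}(A'))\leq\lambda^{p}\mu(f^{-k}(B'))<\eta$, no matter how large or small $\lambda$ is. Thus both norms are at most $(2^p\eta)^{1/p}$, so for $\eta$ small enough we get $\varphi\in U$ and $\lambda T_f^k\varphi\in V$, i.e. $\lambda T_f^k(U)\cap V\neq\emptyset$; Theorem~\ref{R-super} then yields that $T_f$ is $\R$‑supercyclic.

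\emph{Necessity.} Fix $\epsilon>0$ and $B\in\B$ with $0<\mu(B)<\infty$ (the case $\mu(B)=0$ being trivial), and choose $\delta>0$ small. Recalling that $\R$‑supercyclicity coincides with $\R^+$‑supercyclicity \cite{bermudez2002c}, pick an $\R^+$‑supercyclic vector $\varphi$ and, by density of its projective orbit, a (large) time $n$ and $\lambda_0>0$ with $\|\lambda_0\,\varphi\circ f^n-\chi_B\|_p<\delta$. Put $B':=\{x\in B:|\lambda_0\varphi(f^n(x))-1|\leq 1/2\}$, so that $\mu(B\setminus B')\leq(2\delta)^p$ and $|\varphi\circ f^n|\geq(2\lambda_0)^{-1}$ on $B'$; consequently $|\varphi\circ f^{n+k}|\geq(2\lambda_0)^{-1}$ on $f^{-k}(B')$ and $|\varphi\circ f^{n-k}|\geq(2\lambda_0)^{-1}$ on $f^{k}(B')$ for every $1\leq k\leq n$. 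The crux of the proof is to choose a single $k$ with $1\leq k\leq n$ (adjusting $n$ if necessary) for which $\|\varphi\circ f^{n+k}\|_p$ and $\|\varphi\circ f^{n-k}\|_p$ are \emph{both} small compared with $\lambda_0^{-1}$; again this rests on the density of the projective orbit of $\varphi$. For such a $k$ the lower bounds above give $\mu(f^{-k}(B'))\leq(2\lambda_0)^p\|\varphi\circ f^{n+k}\|_p^p$ and $\mu(f^{k}(B'))\leq(2\lambda_0)^p\|\varphi\circ f^{n-k}\|_p^p$, both small, and taking $\delta$ and the auxiliary threshold small enough in terms of $\epsilon$ we may assume $\mu(B\setminus B')<\epsilon$ and $\mu(f^k(B'))\,\mu(f^{-k}(B'))<\epsilon^2$. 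Finally, any $\lambda>0$ with
\[
\frac{\mu(f^k(B'))}{\epsilon}<\lambda^p<\frac{\epsilon}{\mu(f^{-k}(B'))}
\]
— a nonempty range precisely because $\mu(f^k(B'))\,\mu(f^{-k}(B'))<\epsilon^2$ — satisfies $\mu(f^k(B'))<\lambda^p\epsilon$ and $\mu(f^{-k}(B'))<\lambda^{-p}\epsilon$, which completes the argument.

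\emph{Main obstacle.} The sufficiency direction is essentially bookkeeping once one observes that the weights $\lambda^{\pm p}$ are tailored to absorb the two cross terms $\lambda^{-p}\mu(f^k(C'))$ and $\lambda^{p}\mu(f^{-k}(A'))$ uniformly in $\lambda$. The genuine difficulty is in the necessity direction: one must produce a \emph{single} exponent $k$ (and a single subset $B'$) that simultaneously controls the pushforward $\mu(f^k(B'))$ and the pullback $\mu(f^{-k}(B'))$, whereas a direct application of Theorem~\ref{R-super} to $U=B_\delta(\chi_B)$, $V=B_\delta(0)$ only controls $\mu(f^{-k}(B'))$. The scalar $\lambda$ is exactly what reconciles the two demands — it suffices that the product $\mu(f^k(B'))\,\mu(f^{-k}(B'))$ be below $\epsilon^2$ — but extracting such a $k$ from the density of the projective orbit, i.e. pairing the approximation time $n$ (where the orbit looks like $\chi_B$) with symmetric ``vanishing'' times $n\pm k$, is where the technical core of the proof lies.
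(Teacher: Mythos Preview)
Your sufficiency argument is correct and is essentially the paper's proof (the paper works with continuous compactly supported $g,h$ rather than indicators, but the test function $v_n=g\chi_{B_n}+\lambda_n^{-1}(h\circ f^{-k_n})\chi_{f^{k_n}(B_n)}$ and the two estimates are the same as yours).

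The necessity direction has a genuine gap precisely at the point you yourself flag as the ``technical core''. You claim that from the density of the projective orbit of a fixed supercyclic vector $\varphi$ one can extract $k$ with $\|T_f^{n+k}\varphi\|_p$ and $\|T_f^{n-k}\varphi\|_p$ both small relative to $\lambda_0^{-1}\approx\|T_f^n\varphi\|_p/\mu(B)^{1/p}$. But density of the \emph{projective} orbit carries no information about the unscaled norms $\|T_f^m\varphi\|_p$: approximating $0$ merely says that some $\lambda_m T_f^m\varphi$ is small, which is achieved by taking $\lambda_m$ small regardless of $\|T_f^m\varphi\|_p$. For a genuinely $\R$-supercyclic (non-hypercyclic) operator there is no reason the ratio $\|T_f^{n+k}\varphi\|_p\,\|T_f^{n-k}\varphi\|_p/\|T_f^n\varphi\|_p^{2}$ should ever be small, and you supply no mechanism to make it so; the sentence ``this rests on the density of the projective orbit'' is exactly the step that is missing.

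The paper sidesteps this entirely by a sign trick. It applies Theorem~\ref{R-super} \emph{once}, with $U$ a ball around $-\chi_B$ and $V$ a ball around $\chi_B$ (note the opposite signs), obtaining a real $\varphi$ with both $\|\varphi-\chi_B\|_p$ and $\|\varphi\circ f^{-k}+\lambda\chi_B\|_p$ small. On the resulting $B'\subseteq B$ one then has $\varphi>1/2$ while $\varphi\circ f^{-k}<-\lambda/2$; consequently the positive part $(T_f^{-k}\varphi)^+$ and the negative part $\varphi^-$ have globally small $L^p$-norm, yet $(T_f^{-k}\varphi)^+\geq 1/2$ on $f^k(B')$ and $\varphi^-\geq\lambda/2$ on $f^{-k}(B')$. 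These two pointwise lower bounds immediately give $\mu(f^k(B'))<\lambda^p\epsilon$ and $\mu(f^{-k}(B'))<\lambda^{-p}\epsilon$ for the \emph{same} $k$ and $\lambda$ delivered by that single use of transitivity --- no pairing of ``symmetric vanishing times'' is needed.
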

\begin{proof}
$(\Rightarrow )$. Let $B \in \mathcal B$ with $0 < \mu(B) < \infty$ be fixed and let $0< \epsilon < \frac{1}{2^{p}}$.   
By hypothesis, $T_f$ is ${\mathbb R}$-supercyclic and then, by Theorem \ref{R-super}, there exist $k \in \mathbb N$ and $\lambda \in {\mathbb R}^{+}$ such that 
\[\lambda T_f^k(B_{{\left(\frac{\epsilon}{2}\right)}^{\frac{2}{p}}}( - \chi_B)) \cap (B_{{\left( \frac{\epsilon}{2}\right)}^{\frac{2}{p}}}(\chi_B))\neq \emptyset, \tag{3.1} \label{A}.\]
Hence, there exists $\varphi \in L^{p}(X)$ such that 
\[ {\Vert \varphi \circ f^{-k} + \lambda \chi_{B} \Vert}_{p}^{p} < {\lambda}^{p} {\left(\frac{\epsilon}{2}\right)}^{2}  \hspace{0.3cm} \&  \hspace{0.3cm}  {\Vert \varphi - \chi_{B} \Vert}_{p}^{p} < {\left(\frac{\epsilon}{2}\right)}^{2}.\]

By, eventually, replacing $\varphi$ with $Re(\varphi)$, that is the real part of $\varphi$, we can, without loss of generality assume $\varphi$ real-valued. \\
Recall that, given a real-valued function $g$, it is $g=g^+-g^-$, where $g^{+}  := \max \{g, 0\}$, $g^{-}  := \max \{-g, 0\}$. In particular, the mapping $L^{p}(X, {\mathbb R}) \rightarrow L^{p}(X, {\mathbb R})$, $g \mapsto g^{+}$, satisfies ${\Vert {(g + h)}^{+}\Vert}_{p} \leq 
 {\Vert {g}^{+} + h^{+} \Vert}_{p}$, and it commutes with $T_f$, that is ${(T_{f}(g))}^{+} = T_{f}(g^{+})$. \\
 Hence, we have
  \begin{eqnarray*}
{\Vert {(T_{f}^{-k}(\varphi))}^{+} \Vert}_{p} &=&  {\Vert {(T_{f}^{-k}(\varphi) - (- \lambda \chi_{B}) + (- \lambda \chi_{B}))}^{+} \Vert}_{p}\\
 &  \leq  &  {\Vert {(T_{f}^{-k}(\varphi) - (- \lambda \chi_{B}))}^{+} \Vert}_{p}  +  {\Vert {(- \lambda \chi_{B})}^{+} \Vert}_{p} \\
& = &   {\Vert {(T_{f}^{-k}(\varphi) - (- \lambda \chi_{B}))}^{+}\Vert}_{p}  \leq {\Vert T_{f}^{-k}(\varphi)  +  \lambda \chi_{B}\Vert}_{p} \\
& < &  \lambda {\left(\frac{\epsilon}{2}\right)}^{\frac{2}{p}}, 
\end{eqnarray*}
that is 
\[{\Vert {(T_{f}^{-k}(\varphi))}^{+} \Vert}_{p} <  \lambda {\left(\frac{\epsilon}{2}\right)}^{\frac{2}{p}}. \tag{3.2} \label{B}\] 
Moreover,
\begin{eqnarray*}
  {\Vert {\varphi}^{-} \Vert}_{p} &=&  {\Vert {(-\varphi)}^{+} \Vert}_{p}  = {\Vert {(\chi_{B} - \varphi - \chi_{B})}^{+} \Vert}_{p}\\
& \leq &    {\Vert (\chi_{B} - \varphi )^+\Vert}_{p} + {\Vert {( - \chi_{B} )}^{+} \Vert}_{p}  = {\Vert (\chi_{B} - \varphi )^+\Vert}_{p} \leq  {\Vert \chi_{B} - \varphi \Vert}_{p}\\
&  < &  {\left(\frac{\epsilon}{2}\right)}^{\frac{2}{p}},
\end{eqnarray*}
that is 
\[ {\Vert {\varphi}^{-} \Vert}_{p} <  {\left(\frac{\epsilon}{2}\right)}^{\frac{2}{p}}  \tag{3.3}. \label{C}\] 
Let $B_{1}:=  B \cap \{ {\vert 1  - \varphi \vert}^{p} > \frac{\epsilon}{2}\}$ and $B_2 := B \cap \{{\vert \lambda + \varphi \circ f^{-k} \vert}^{p} > \lambda^{p} \frac{\epsilon}{2}\}.$
Then, as 
\begin{eqnarray*}
\frac{\epsilon}{2} \mu(B_{1})  &  = & \int_{B_{1}} \frac{\epsilon}{2} d \mu < \int_{B_{1}} {\vert 1 - \varphi \vert}^{p} d \mu  =  \int_{B_{1}} {\vert \chi_{B} - \varphi \vert}^{p} d \mu \\
&\leq& \int_{X} {\vert \chi_{B} - \varphi \vert}^{p} d \mu=  {\Vert \chi_{B} - \varphi \Vert}_{p}^{p} <  {\left (\frac{\epsilon}{2} \right)}^2
\end{eqnarray*}
and, analogously,
 \begin{eqnarray*}
{\lambda}^{p} \frac{\epsilon}{2} \mu(B_{2})  &  = & \int_{B_{2}} {{\lambda}^{p} \frac{\epsilon}{2} }d \mu < \int_{B_{2}} {\vert \lambda + \varphi \circ f^{-k} \vert}^{p}  d \mu  =  \int_{B_{2,}} {\vert \lambda \chi_{B} +  \varphi \circ f^{-k} \vert}^{p} d \mu\\ 
&\leq& \int_{X} {\vert \lambda \chi_{B} + \varphi \circ f^{-k} \vert}^{p} d \mu =  {\Vert \lambda \chi_{B} +  \varphi \circ f^{-k}  \Vert}_{p}^{p} <  {\lambda}^{p} {\left (\frac{\epsilon}{2} \right)}^2,\\
\end{eqnarray*}
 it is $\mu(B_{1}) < \frac{\epsilon}{2}$ and $\mu(B_{2}) < \frac{\epsilon}{2}$.  \\
Defining \[B^{'} := B \cap\left  \{ {\vert 1  - \varphi \vert}^{p} \leq \frac{\epsilon}{2} \right \} \cap \left \{{\vert \lambda + \varphi \circ f^{-k} \vert}^{p} \leq {\lambda}^{p} \frac{\epsilon}{2}\right \},\] then 
\[\mu(B \setminus B^{'}) \leq \mu(B_{1}) + \mu(B_{2}) < \frac{\epsilon}{2} + \frac{\epsilon}{2} =\epsilon.\] 

\noindent
Note that  ${\varphi}_{|B'}  >0$ and $(\varphi \circ f^{-k})_{|B'} <0$: considering $x \in  B'$, it is, respectively, $\varphi(x)  = 1 + \varphi(x) - 1  \geq 1 - \vert 1 - \varphi(x) \vert  \geq 1 - {\left (\frac{\epsilon}{2} \right)}^{\frac{1}{p}} >0,$ and $(\varphi \circ f^{-k})(x)   =  \lambda + (\varphi \circ f^{-k})(x) - \lambda   \leq   \vert (\varphi \circ f^{-k})(x) + \lambda \vert - \lambda \leq   \lambda \left [ {\left (\frac{\epsilon}{2} \right)}^{\frac{1}{p}}-1 \right ] < 0.$ \\
Hence, 
\begin{eqnarray*}
\frac{1}{2^p}\mu(f^{k}(B')) & \leq & {\left ( 1 -\left ({\frac{\epsilon}{2}}\right )^{\frac{1}{p}} \right )}^{p} \mu(f^{k}(B'))  = \int_{B'} {\left ( 1 - \left ( {\frac{\epsilon}{2}}\right )^{\frac{1}{p}}\right )}^{p} d \mu f^{k} \\
& \leq & \int_{B'} (\varphi^+)^p d \mu f^{k} =  \int_{f^k(B')} ((\varphi \circ f^{-k})^+)^p d \mu\\
&  \leq &  \int_{X} ( (\varphi \circ f^{-k})^+)^{p} d \mu  =  {\Vert (T_f^{-k}\varphi)^{+}\Vert}_{p}^{p} \\
&  < &  \lambda^p {\left( \frac{\epsilon}{2} \right)}^{2} \hspace{0.5cm} \text{     (by (\ref{B}))}
\end{eqnarray*}
and, furthermore, 
\begin{eqnarray*}
\frac{\lambda^p}{2^p} \mu(f^{-k}(B')) & \leq & \lambda^p \left ( 1 - {\left (\frac{\epsilon}{2} \right)}^{\frac{1}{p}}\right )^p \mu(f^{-k}(B'))   =   \int_{B'} \lambda^p \left ( 1 - {\left (\frac{\epsilon}{2} \right)}^{\frac{1}{p}}\right )^p d \mu f^{-k} \\
&\leq& \int_{B'} (-\varphi \circ f^{-k})^{p} d \mu f^{-k} = \int_{B'}  ((\varphi \circ f^{-k})^-)^{p} d \mu f^{-k}\\
&  = &  \int_{f^{-k}(B')} (\varphi^-)^{p} d \mu  \leq   \int_{X} (\varphi^- )^{p} d \mu  \leq  {\Vert {\varphi}^{-}  \Vert}_{p}^{p} \\
&  <  &  {\left( \frac{\epsilon}{2} \right)}^{2} \hspace{0.5cm} \text{     (by (\ref{C}))}
\end{eqnarray*}
so that 
\[\mu(f^{k}(B^{'})) < \lambda^p 2^{p} {\left( \frac{\epsilon}{2} \right)}^{2} < \lambda^p \epsilon, \ \ \text{ and } \ \  \mu(f^{-k}(B^{'})) < {\lambda}^{-p} 2^p  {\left( \frac{\epsilon}{2} \right)}^{2}  < {\lambda}^{-p} \epsilon,\]
implying the thesis.

$(\Leftarrow )$.We apply Theorem \ref{R-super}. Let $U$ and $V$ be non-empty open subsets of $L^{p}(X)$. We need to find $n \in \mathbb N$ and $\lambda \in {\mathbb R}^{+}$ such that $\lambda T^n(U)\cap V\neq \emptyset.$ Let $g$ and $h$ be two continuous functions with compact support in $U$ and $V$, respectively. Let $B$ be 
a compact subset of $X$ containing both the support of $g$ and of $h$.  Let, for each $n \in {\mathbb N}$, ${\epsilon}_{n} \in (0, \min\{\frac{1}{2^n}, \frac{1}{2^p}\})$. By hypothesis, there exist $B_{n} \subseteq B$, $k_{n} \geq 1$, $\lambda_{n} >0$, 
such that 
\[ \mu(B \setminus B_{n})< \epsilon_{n}, \,\, \,\,  \mu(f^{k_{n}}(B_{n}))<  {(\lambda_{n})}^p\epsilon_{n} \text{  and  }  \mu(f^{-k_{n}}(B_{n}))< {(\lambda_{n})}^{-p} \epsilon_{n}, \]
that is the same as 
\[ \mu(B \setminus B_{n})< \epsilon_{n}, \,\, \,\,  {(\lambda_{n})}^{-p} \mu(f^{k_{n}}(B_{n}))<  \epsilon_{n} \text{  and  }  {(\lambda_{n})}^{p} \mu(f^{-k_{n}}(B_{n}))<  \epsilon_{n}. \]

Define, for every $n \in {\mathbb N}$, 
\[v_{n} := g \chi_{B_{n}} + {(\lambda_{n})}^{-1} (h \circ f^{-k_n}) \chi_{f^{k_{n}}(B_{n})}\]
which belong to  $L^p(X)$ since they are measurable, bounded and different from $0$ at most on a subset of $B \cup f^{k_{n}}(B)$. 
Then 
\begin{eqnarray*}
{\Vert v_{n} - g \Vert}_{p}^{p} & = &  {\Vert g \chi_{B_{n}} + {(\lambda_{n})}^{-1} (h \circ f^{-k_n}) \chi_{f^{k_{n}}(B_{n})} - g \Vert}_{p}^{p} \\
& \leq & 2^{p} \left [{\Vert g \Vert}_{\infty}^{p} \mu(B \setminus B_{n}) + {(\lambda_{n})}^{-p} {\Vert h \Vert}_{\infty}^{p} \mu(f^{k_{n}}(B_{n})) \right ]\\
& = & 2^{p} \left [{\Vert g \Vert}_{\infty}^{p} \mu(B \setminus B_{n}) + {\Vert h \Vert}_{\infty}^{p} {(\lambda_{n})}^{-p}  \mu(f^{k_{n}}(B_{n})) \right ]\\
& < & 2^{p} \left [{\Vert g \Vert}_{\infty}^{p} \epsilon_{n} + {\Vert h \Vert}_{\infty}^{p} \epsilon_{n} \right ],\\
\end{eqnarray*}
that is, 
\[\lim_{n \rightarrow \infty} {\Vert v_{n} - g \Vert}_{p} = 0.\]
Moreover, 
\begin{eqnarray*}
{\Vert {\lambda_{n}} T_{f}^{k_{n}} v_{n} - h \Vert}_{p}^{p} & = & {\Vert {\lambda_{n}} T_{f}^{k_{n}} \left (g \chi_{B_{n}} + {(\lambda_{n})}^{-1} (h \circ f^{-k_n}) \chi_{f^{k_{n}}(B_{n})} \right) - h \Vert}_{p}^{p}\\
& = & {\Vert \left ( {\lambda_{n}}  g \chi_{B_{n}} + (h \circ f^{-k_n}) \chi_{f^{k_{n}}(B_{n})} \right) \circ f^{k_{n}} - h \Vert}_{p}^{p}\\
& = &   {\Vert {\lambda_{n}} \left (g \chi_{B_{n}}  \circ f^{k_n} \right) + \left (h \circ f^{-k_n} \chi_{f^{k_{n}}(B_{n})} \right) \circ f^{k_n} - h \Vert}_{p}^{p} \\
& \leq & 2^{p} \left [{(\lambda_{n})}^{p} {\Vert g \Vert}_{\infty}^{p} \mu(f^{- k_n}(B_{n})) + {{\Vert h \Vert}_{\infty}}^{p} \mu(B \setminus B_{n}) \right ]\\
& = &   2^{p} \left [ {\Vert g \Vert}_{\infty}^{p} {(\lambda_{n})}^{p}\mu(f^{- k_n}(B_{n})) + {{\Vert h \Vert}_{\infty}}^{p} \mu(B \setminus B_{n})\right ]\\
& < & 2^{p} \left [{\Vert g \Vert}_{\infty}^{p}  \epsilon_{n} + {\Vert h \Vert}_{\infty}^{p} \epsilon_{n} \right ], 
\end{eqnarray*}
that is, 
\[\lim_{n \rightarrow \infty} {\Vert  {\lambda_{n}} T_{f}^{k_{n}} v_{n} - h \Vert}_{p}= 0.\]

\noindent
Thus, for $n$ sufficiently large, ${\lambda_{n}}  T_{f}^{k_{n}}(U) \cap V \not=\emptyset$. Hence, we take for a such large $n$, 
$k_{n} \in {\mathbb N}$ and $\lambda_{n} \in {\mathbb R}^{+}$, completing the proof.
\end{proof}

\section{$\mathbb C$-supercyclicity in dissipative setting}
Next, we prove the shift-like characterization, in the dissipative setting, of composition operators, for what concerns supercyclicity.  
\begin{lem} {\em{\cite[Lemma 4.2.3]{DAnielloDarjiMaiuriello}}} \label{factorBw}
Let $T_f$ be a dissipative composition operator of bounded distortion, generated by $W$. Consider the bilateral weighted backward shift $B_w$ with weights
\[w_{k} =  \left( \frac{\mu(f^{k-1}(W))}{\mu(f^{k}(W))}\right)^{\frac{1}{p}}, k \in \Z.  \tag{WS} \] 
Then, $B_w$ is a factor of $T_f$ by the factor map $\Pi: L^p(X) \rightarrow \ell ^p({\mathbb Z}) $ defined as 
 \[\Pi(\varphi)=\left  \{ \dfrac{\mu(f^{k}(W))^{\frac{1}{p}}}{\mu(W)} \int_{W}  \varphi \circ f^{k} d \mu \right \}_{k \in \Z}  \]
\end{lem}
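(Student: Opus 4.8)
The plan is to establish, one at a time, the four properties a factor map must have: linearity, boundedness (which in particular will show that $\Pi(\varphi)$ genuinely belongs to $\ell^p(\mathbb Z)$), surjectivity, and the intertwining identity $\Pi\circ T_f=B_w\circ\Pi$. Linearity is immediate from linearity of the integral. The intertwining identity is a short computation that uses the precise form (WS) of the weights: since $T_f\varphi=\varphi\circ f$, the $k$-th coordinate of $\Pi(T_f\varphi)$ is $\frac{\mu(f^k(W))^{1/p}}{\mu(W)}\int_W\varphi\circ f^{k+1}\,d\mu$, while $(B_w\Pi(\varphi))_k=w_{k+1}\,\Pi(\varphi)_{k+1}=\left(\frac{\mu(f^k(W))}{\mu(f^{k+1}(W))}\right)^{1/p}\frac{\mu(f^{k+1}(W))^{1/p}}{\mu(W)}\int_W\varphi\circ f^{k+1}\,d\mu$; the two agree once the $\mu(f^{k+1}(W))^{1/p}$ factors cancel. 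Thus (WS) is exactly the recipe that makes the semi-conjugacy work.

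\emph{Boundedness.} Fix $\varphi\in L^p(X)$; the functions $\varphi\circ f^k$ are measurable by the standing hypotheses on $f$. Since $\mu(W)<\infty$, Hölder's inequality on $W$ gives $\left|\int_W\varphi\circ f^k\,d\mu\right|^p\le\mu(W)^{p-1}\int_W|\varphi\circ f^k|^p\,d\mu$ (valid also at $p=1$). Let $\mu_k'$ denote the image of $\mu$ restricted to $W$ under $f^k$, a measure supported on $f^k(W)$; the change of variables formula yields $\int_W|\varphi\circ f^k|^p\,d\mu=\int_{f^k(W)}|\varphi|^p\,d\mu_k'$. Applying the bounded distortion inequality (BD) to the set $B=f^{-k}(A)\subseteq W$ for an arbitrary $A\subseteq f^k(W)$ gives $\mu_k'(A)=\mu(f^{-k}(A))\le K\,\frac{\mu(W)}{\mu(f^k(W))}\,\mu(A)$, i.e. $\mu_k'\le K\frac{\mu(W)}{\mu(f^k(W))}\mu$ on $f^k(W)$. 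Chaining these three estimates makes every occurrence of $\mu(W)$ and $\mu(f^k(W))$ cancel and leaves $\left|\frac{\mu(f^k(W))^{1/p}}{\mu(W)}\int_W\varphi\circ f^k\,d\mu\right|^p\le K\int_{f^k(W)}|\varphi|^p\,d\mu$. Summing over $k\in\mathbb Z$ and using the disjoint decomposition (D), namely $X=\dot\cup_{k}f^k(W)$, we obtain $\|\Pi(\varphi)\|_{\ell^p}^p\le K\int_X|\varphi|^p\,d\mu=K\|\varphi\|_{L^p}^p$. Hence $\Pi$ is well defined and bounded, with $\|\Pi\|\le K^{1/p}$.

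\emph{Surjectivity.} I would produce an explicit right inverse. Define $\iota\colon\ell^p(\mathbb Z)\to L^p(X)$ by $\iota(\mathbf x)=\sum_{k\in\mathbb Z}\mu(f^k(W))^{-1/p}\,x_k\,\chi_{f^k(W)}$; because the sets $f^k(W)$ are pairwise disjoint and cover $X$, the partial sums form a Cauchy sequence in $L^p$ and $\|\iota(\mathbf x)\|_{L^p}=\|\mathbf x\|_{\ell^p}$, so $\iota$ is a linear isometric embedding. Using the disjointness once more, $\chi_{f^k(W)}\circ f^j$ equals $\chi_W$ on $W$ when $j=k$ and $0$ on $W$ otherwise, so $\int_W\iota(\mathbf x)\circ f^j\,d\mu=\mu(f^j(W))^{-1/p}x_j\,\mu(W)$ and therefore $\Pi(\iota(\mathbf x))_j=x_j$ for every $j$. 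Thus $\Pi\circ\iota=\mathrm{id}_{\ell^p(\mathbb Z)}$, so $\Pi$ is onto; combined with the previous steps this shows $B_w$ is a factor of $T_f$ via $\Pi$.

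\emph{Main obstacle.} The one genuinely delicate point is the boundedness estimate, and within it the correct bookkeeping of the change of variables that converts $\int_W|\varphi\circ f^k|^p\,d\mu$ into a multiple of $\int_{f^k(W)}|\varphi|^p\,d\mu$: this is precisely where hypothesis (BD) enters, and without it $\Pi$ need not even take values in $\ell^p(\mathbb Z)$. Everything else is essentially algebraic — the intertwining identity is a cancellation tailored to (WS), and the section $\iota$ is built directly from the indicator functions of the pieces $f^k(W)$ supplied by dissipativity (D) — with the only measurability concerns (for $\varphi\circ f^k$) already settled by the blanket assumptions on $f$.
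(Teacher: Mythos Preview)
The paper does not supply its own proof of this lemma: it is quoted verbatim from \cite[Lemma 4.2.3]{DAnielloDarjiMaiuriello} and used as a black box, so there is nothing in the present paper to compare your argument against.

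That said, your proof is correct and self-contained. The intertwining computation is exactly right, and your surjectivity argument via the explicit isometric section $\iota(\mathbf x)=\sum_k\mu(f^k(W))^{-1/p}x_k\,\chi_{f^k(W)}$ is clean. The boundedness step is the substantive one, and you handle it properly: H\"older on $W$, then the pushforward identity $\int_W|\varphi\circ f^k|^p\,d\mu=\int_{f^k(W)}|\varphi|^p\,d\mu_k'$, then the observation that (BD) applied with $B=f^{-k}(A)\subseteq W$ yields $\mu_k'(A)\le K\,\mu(W)\mu(f^k(W))^{-1}\mu(A)$, whence the same inequality holds at the level of integrals of nonnegative functions. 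The powers of $\mu(W)$ and $\mu(f^k(W))$ do indeed cancel to give $|\Pi(\varphi)_k|^p\le K\int_{f^k(W)}|\varphi|^p\,d\mu$, and the disjoint union (D) finishes the estimate. Your identification of (BD) as the only place where something non-formal happens is accurate.
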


\begin{rmk} \label{rm}
As a consequence of the previous result, we have that if $T_f$ is cyclic, hypercyclic, $\mathbb R$- or $\mathbb C$-supercyclic, then so is the $B_w$ given by \em{(WS)}.
\end{rmk}

We now, in order to proceed, need to recall the composition operator representation of the weighted backward shift $B_w$, more specifically, the following proposition.
 \begin{prop}{\em{\cite[Proposition 2.0.1]{DAnielloDarjiMaiuriello2}}} \label{Tg} Every weighted backward shift $B_w$ is conjugate, by an isometry, to the composition 
 operator $(\Z,{\mathcal P}(\Z), \nu, g, T_g)$,  where \[g(i) = i+1,  \]
\[ \nu(0)=1,  \ \ 
\nu(i)= \begin{cases}
 \dfrac{1}{(w_{1}\cdots w_i)^{p}}, \ \ \ \  \ \  \ \ i > 0 \\
 \left( w_{i+1}\cdots w_0\right)^{p}, \ \ \ \ i < 0.
 \end{cases}\]
 Moreover, for every $i \in \Z$, \[ w_i = \left(\frac{\nu(i-1)}{\nu(i)}\right)^{\frac{1}{p}},\] and when $B_w$ is given by \em{(WS)}, we have \[ \nu(i) =  \dfrac{\mu(f^{i}(W))}{\mu(W)}.\] 
 \end{prop}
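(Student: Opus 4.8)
The plan is to exhibit the conjugating isometry explicitly and to reduce everything to a one-step recursion on the weights. First I would \emph{define} $\nu$ on $(\Z,\mathcal P(\Z))$ by $\nu(0)=1$ together with the recursion $\nu(i)=\nu(i-1)/w_i^p$; solving this upward and downward from $0$ yields exactly the stated closed form ($1/(w_1\cdots w_i)^p$ for $i>0$ and $(w_{i+1}\cdots w_0)^p$ for $i<0$). Since the weights $\{w_i\}$ are bounded (and, when $B_w$ is invertible, also bounded away from $0$), each $\nu(i)$ lies in $(0,\infty)$, so $\nu$ is a $\sigma$-finite Borel measure on $\Z$; moreover the Radon--Nikodym derivatives of $\nu\circ g$ and $\nu\circ g^{-1}$ with respect to $\nu$ take at the point $i$ the values $\nu(i+1)/\nu(i)=w_{i+1}^{-p}$ and $\nu(i-1)/\nu(i)=w_i^p$, which are bounded below precisely because the weights are bounded above, so $(\Z,\mathcal P(\Z),\nu,g,T_g)$ is a bona fide composition dynamical system.

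Next I would introduce $U\colon\ell^p(\Z)\to L^p(\Z,\nu)$, $(U\mathbf x)(i)=\nu(i)^{-1/p}x_i$. One checks at once that $\|U\mathbf x\|_{L^p(\nu)}^p=\sum_{i\in\Z}|x_i|^p\nu(i)^{-1}\nu(i)=\|\mathbf x\|_p^p$, so $U$ is an isometry, and it is bijective with $(U^{-1}\varphi)(i)=\nu(i)^{1/p}\varphi(i)$. To see that $U$ intertwines $B_w$ and $T_g$, I would compare $(U B_w\mathbf x)(i)=w_{i+1}x_{i+1}\nu(i)^{-1/p}$ with $(T_g U\mathbf x)(i)=(U\mathbf x)(i+1)=x_{i+1}\nu(i+1)^{-1/p}$; these coincide for every $\mathbf x$ and every $i$ exactly when $w_{i+1}=(\nu(i)/\nu(i+1))^{1/p}$, i.e. $w_i=(\nu(i-1)/\nu(i))^{1/p}$, which is the defining recursion for $\nu$. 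Hence $U B_w=T_g U$, which gives the conjugacy $B_w=U^{-1}T_g U$ and simultaneously establishes the first ``moreover'' identity.

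For the remaining claim I would substitute the weights (WS), $w_k=\bigl(\mu(f^{k-1}(W))/\mu(f^k(W))\bigr)^{1/p}$, into the closed form for $\nu$ and telescope: for $i>0$, $(w_1\cdots w_i)^p=\prod_{k=1}^{i}\mu(f^{k-1}(W))/\mu(f^k(W))=\mu(W)/\mu(f^i(W))$, so $\nu(i)=\mu(f^i(W))/\mu(W)$; for $i<0$, the same telescoping of $(w_{i+1}\cdots w_0)^p=\prod_{k=i+1}^{0}\mu(f^{k-1}(W))/\mu(f^k(W))=\mu(f^i(W))/\mu(W)$ gives the identical answer, and the case $i=0$ is trivial. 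Therefore $\nu(i)=\mu(f^i(W))/\mu(W)$ for all $i\in\Z$.

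I do not anticipate a genuine obstacle: this is the standard identification of a weighted shift with an unweighted composition operator after a diagonal change of density. The only points demanding a little care are checking that $\nu$ meets the structural requirements of a composition dynamical system (boundedness from below of the two Radon--Nikodym derivatives, which is exactly where boundedness of the weights enters) and keeping the telescoping index ranges straight in the final computation.
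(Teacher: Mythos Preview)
The paper does not supply its own proof of this proposition: it is quoted verbatim from \cite{DAnielloDarjiMaiuriello2} and used as a black box. Your argument is the standard diagonal change-of-density identification, and it is correct and complete; nothing in the paper contradicts or supplements it.

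One small wording slip: you write that the two Radon--Nikodym derivatives $w_{i+1}^{-p}$ and $w_i^{p}$ are ``bounded below precisely because the weights are bounded above.'' Only the first of these follows from an upper bound on the weights; the second, $w_i^{p}$, is bounded below exactly when the weights are bounded away from zero, i.e.\ when $B_w$ is invertible. You already flagged this invertibility hypothesis parenthetically a few lines earlier, so the argument is fine, but the sentence as written is slightly off and should invoke both the upper and the lower bound on $\{w_i\}$.
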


 \begin{prop}[$\mathbb R$-supercyclicity Sufficient Condition] \label{newcondition}
Let $(X,{\mathcal B},\mu, f, T_f)$ be a  dissipative composition dynamical system generated by $W$. 
If, for each $\epsilon >0$ and for each $N \in \mathbb N$, there exists $k \geq 1$, $\lambda \in \mathbb R^{+}$ such that 
\[ \mu(f^k(\bigcup_{\vert j \vert \leq N} f^j(W)))<{\epsilon}{\lambda ^p} \ \  \text{ and } \ \ \mu(f^{-k}(\bigcup_{\vert j \vert \leq N}f^j(W)))<{\epsilon}{{\lambda}^{-p}} \]
then $T_f$ is $\mathbb R$-supercyclic and, therefore, $\mathbb C$-supercyclic.
\end{prop}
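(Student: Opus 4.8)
The plan is to derive the conclusion from the general characterization of $\mathbb R$-supercyclicity obtained in Theorem~\ref{generalcase}, using dissipativity to reduce every finite-measure test set $B$ to one of the structured ``windows'' $W_N:=\bigcup_{|j|\le N}f^j(W)$, which are exactly the sets on which the hypothesis is phrased. So I fix $\epsilon>0$ and $B\in\mathcal B$ with $0<\mu(B)<\infty$ (the $\mu$-null case being trivial). The first step is an approximation: since $T_f$ is dissipative, (D) gives $X=\dot{\cup}_{j\in\Z}f^j(W)$, so $B$ is the disjoint union $\dot{\cup}_{j\in\Z}\bigl(B\cap f^j(W)\bigr)$; as $\mu(B)<\infty$ the series $\sum_{j\in\Z}\mu(B\cap f^j(W))$ converges, hence its tail vanishes and one may choose $N\in\mathbb N$ with $\mu(B\setminus W_N)<\epsilon$. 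Put $B':=B\cap W_N\subseteq B$, so that $\mu(B\setminus B')<\epsilon$ holds already.

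The second step is to feed this $N$ and the same $\epsilon$ into the hypothesis: it produces $k\ge 1$ and $\lambda\in\R^{+}$ with $\mu(f^{k}(W_N))<\epsilon\lambda^{p}$ and $\mu(f^{-k}(W_N))<\epsilon\lambda^{-p}$. Since $B'\subseteq W_N$ and images under the maps $f^{\pm k}$ are monotone with respect to inclusion, $\mu(f^{k}(B'))\le\mu(f^{k}(W_N))<\epsilon\lambda^{p}$ and $\mu(f^{-k}(B'))\le\mu(f^{-k}(W_N))<\epsilon\lambda^{-p}$. Thus $B'$, $k$, $\lambda$ are precisely the data required in Theorem~\ref{generalcase}, for arbitrary $\epsilon>0$ and arbitrary $B$ of finite measure, so $T_f$ is $\mathbb R$-supercyclic; being $\mathbb R$-supercyclic it is a fortiori $\mathbb C$-supercyclic (cf.\ the diagram in Section~2, every real scalar being a complex one).

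There is essentially no obstacle beyond bookkeeping; the only genuinely substantive point is the first step, namely that dissipativity together with $\mu(B)<\infty$ forces $\mu(B\setminus W_N)\to 0$, which is exactly what licenses replacing the arbitrary finite-measure set of Theorem~\ref{generalcase} by the windows $W_N$ on which the assumed estimate lives. One may also note in passing that each $W_N$ has finite measure — e.g.\ $\chi_{W_N}=\sum_{|j|\le N}\chi_{W}\circ f^{-j}\in L^p(X)$, since $\mu(W)<\infty$ and each $T_f^{-j}$ is bounded — so that the quantities $\mu(f^{\pm k}(W_N))$ occurring in the hypothesis are finite and the argument is not vacuous.
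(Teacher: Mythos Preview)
Your proof is correct and follows essentially the same approach as the paper: both reduce the general characterization of Theorem~\ref{generalcase} to the hypothesis by using dissipativity to approximate an arbitrary finite-measure $B$ by $B':=B\cap\bigcup_{|j|\le N}f^j(W)$, and then apply monotonicity of $\mu\circ f^{\pm k}$ under inclusion. The only differences are notational (you name the window $W_N$ explicitly) and your added remark that $\mu(W_N)<\infty$, which the paper leaves implicit.
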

\begin{proof}
Let $\epsilon >0$ and $B \in \mathcal B$, with $0<\mu(B) < \infty$. Let $N \in \mathbb N$ be so large that \[ \mu\left( B \setminus (\bigcup_{\vert j \vert \leq N} B_j)\right) < \epsilon \]
where $B_j:=  B \cap f^j(W).$ Define 
\[B'=\bigcup_{\vert j \vert \leq N} B_j =\bigcup_{\vert j \vert \leq N} (B \cap f^j(W)).\]
 Then, $\mu(B \setminus B') < \epsilon$. 
By hypothesis, in correspondence of $\epsilon$ and $N$, there exists $k \geq 1$,  $\lambda \in \mathbb R^{+}$ such that 
\[ \mu(f^k(\bigcup_{\vert j \vert \leq N}f^j(W)))<{\epsilon}{\lambda ^p}  \ \  \text{ and } \ \ \mu(f^{-k}(\bigcup_{\vert j \vert \leq N}f^j(W)))<{\epsilon}{\lambda ^{-p}}.\]
Hence
\begin{eqnarray*}
\mu(f^k(B')) & =& \mu(f^k(\bigcup_{\vert j \vert \leq N}  B_j ))=\mu(f^k (\bigcup_{\vert j \vert \leq N} (B \cap f^j(W)))) \\
& \leq & \mu(f^k(\bigcup_{\vert j \vert \leq N}f^j(W))) < {\epsilon}{\lambda ^p}
\end{eqnarray*}
and 
\begin{eqnarray*}
\mu(f^{-k}(B')) & = & \mu(f^{-k}(\bigcup_{j=-N}^N B_j ))=\mu(f^{-k} (\bigcup_{j=-N}^N (B \cap f^j(W))))\\
&  \leq & \mu(f^{-k}(\bigcup_{\vert j \vert \leq N}f^j(W))) <{\epsilon}{\lambda ^{-p}}
\end{eqnarray*}
Therefore, by the arbitrariness of $\epsilon$ and $B$, the thesis follows, that is, for all $\epsilon >0$, for all $B \in \mathcal B$ with $0<\mu(B) < \infty$, there exist $B' \subseteq B$ 
and $k \geq 1$, $\lambda \in \mathbb R^{+}$, such that \[ \mu(B \setminus B')<\epsilon, \,\, \mu(f^{-k}(B'))<{\epsilon}{\lambda ^{-p}},\,\,  \mu(f^{k}(B'))<{\epsilon}{\lambda ^{p}}, \] i.e., 
by Theorem \ref{generalcase}, $T_f$ is $\mathbb R$-supercyclic (and, hence, $\mathbb C$-supercyclic).
\end{proof}

It is already known that, in the dissipative context with the bounded distortion, with respect to some chaotic and to some hyperbolic properties, composition operators 
behave exactly as the specific bilateral weighted backward shift given in (WS) of Lemma \ref{factorBw}. More precisely: many chaotic properties (among which hypercyclicity), 
as well as many hyperbolic properties, can be transferred from $T_f$ to $B_w$ and viceversa. For this reason, in this setting, they are said to have a ``shift-like'' behaviour. 
A detailed explanation of such behavior is provided in \cite{DAnielloDarjiMaiuriello2}. We now prove that the same happens with $\mathbb R$- and $\mathbb C$-supercyclicity.

\begin{thm}[$\mathbb R$-supercyclicity: shift-like behavior] \label{Rlike}
Let $(X,{\mathcal B},\mu, f, T_f)$ be a  dissipative composition dynamical system of bounded distortion generated by $W$.  The composition operator $T_f$ is $\mathbb R$-supercyclic 
if and only if the weighted shift $B_w$ given in (WS) is so.
\end{thm}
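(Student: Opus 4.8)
The plan is to prove the two implications separately; the forward one is immediate and essentially all the work lies in the converse. If $T_f$ is $\mathbb{R}$-supercyclic, then by Lemma~\ref{factorBw} the shift $B_w$ of (WS) is a factor of $T_f$, and since $\mathbb{R}$-supercyclicity is preserved by factors, $B_w$ is $\mathbb{R}$-supercyclic; this is exactly Remark~\ref{rm}.

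For the converse, suppose $B_w$ (with weights (WS)) is $\mathbb{R}$-supercyclic; the goal is to verify, for $T_f$, the hypothesis of Proposition~\ref{newcondition}. By Proposition~\ref{Tg}, $B_w$ is conjugate, via an isometric isomorphism, to the composition operator $(\mathbb{Z},\mathcal{P}(\mathbb{Z}),\nu,g,T_g)$ with $g(i)=i+1$ and $\nu(i)=\mu(f^i(W))/\mu(W)$; since $\mathbb{R}$-supercyclicity is preserved by topological conjugacy, $T_g$ is $\mathbb{R}$-supercyclic. Moreover $T_g$ is a composition operator over the discrete space $\mathbb{Z}$, which is locally compact, $\sigma$-compact, second countable and Hausdorff, with $\nu$ locally finite, so Theorem~\ref{generalcase} is applicable to $T_g$.

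Now fix $\epsilon>0$ and $N\in\mathbb{N}$, and set $C_N:=\{j\in\mathbb{Z}:|j|\le N\}$, a finite union of $\nu$-atoms each of positive mass (positivity of $\mu(f^j(W))$ follows since $f$ is a bijection whose Radon-Nikodym derivatives are bounded below), so that $m_N:=\min_{|j|\le N}\nu(j)>0$. Pick $\tilde\epsilon\in(0,\min\{m_N,\epsilon/\mu(W)\})$ and apply Theorem~\ref{generalcase} to $T_g$ with this $\tilde\epsilon$ and $B=C_N$: it produces $B'\subseteq C_N$, $k\ge 1$ and $\lambda>0$ with $\nu(C_N\setminus B')<\tilde\epsilon$, $\nu(g^k(B'))<\lambda^p\tilde\epsilon$ and $\nu(g^{-k}(B'))<\lambda^{-p}\tilde\epsilon$. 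As $C_N\setminus B'$ is a union of atoms of $C_N$, if it were nonempty its $\nu$-measure would be at least $m_N>\tilde\epsilon$, a contradiction; hence $B'=C_N$, and so $\nu(g^{k}(C_N))<\lambda^{p}\tilde\epsilon$ and $\nu(g^{-k}(C_N))<\lambda^{-p}\tilde\epsilon$. By the dissipativity condition (D) the iterates $f^m(W)$ are pairwise disjoint, so $f^{k}(\bigcup_{|j|\le N}f^j(W))=\bigcup_{m=k-N}^{k+N}f^m(W)$ and therefore
\[ \mu\Bigl(f^{k}\bigl(\bigcup_{|j|\le N}f^j(W)\bigr)\Bigr)=\mu(W)\,\nu(g^k(C_N))<\mu(W)\lambda^p\tilde\epsilon\le\lambda^p\epsilon, \]
and symmetrically $\mu(f^{-k}(\bigcup_{|j|\le N}f^j(W)))<\lambda^{-p}\epsilon$. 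Since $\epsilon>0$ and $N\in\mathbb{N}$ were arbitrary, Proposition~\ref{newcondition} applies and $T_f$ is $\mathbb{R}$-supercyclic, which completes the equivalence.

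I expect the main obstacle to be the mismatch between what Theorem~\ref{generalcase} gives for $T_g$ --- control of $\nu$ only on the translate of \emph{some} subset $B'$ of $C_N$ --- and what Proposition~\ref{newcondition} requires --- control on the translate of \emph{all} of $\bigcup_{|j|\le N}f^j(W)$. Closing this gap is where the purely atomic structure of the model measure $\nu$ is used: choosing $\tilde\epsilon$ strictly below the smallest atom $m_N$ of $C_N$ forces the approximating set $B'$ to coincide with $C_N$. The only remaining things to verify carefully are that $T_g$ really meets the standing hypotheses of Theorem~\ref{generalcase}, and the bookkeeping identity $\mu(f^{k}(\bigcup_{|j|\le N}f^j(W)))=\mu(W)\,\nu(g^k(C_N))$, which follows from disjointness in (D) together with the formula $\nu(i)=\mu(f^i(W))/\mu(W)$ from Proposition~\ref{Tg}.
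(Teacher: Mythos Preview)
Your proof is correct and follows essentially the same approach as the paper's: both use the factor map for the forward direction, and for the converse pass to the conjugate model $T_g$ on $(\mathbb Z,\nu)$, apply Theorem~\ref{generalcase} with $\tilde\epsilon$ chosen below the smallest atom of $C_N$ to force $B'=C_N$, translate back via $\nu(i)=\mu(f^i(W))/\mu(W)$, and conclude with Proposition~\ref{newcondition}. The only differences are cosmetic (your explicit check that $T_g$ meets the standing hypotheses of Theorem~\ref{generalcase}, and your open-interval choice of $\tilde\epsilon$ versus the paper's $\tilde\epsilon=\tfrac12\min\{\cdot\}$).
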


\begin{proof}
Since supercyclicity is preserved by semi-conjugation and $B_w$ is a factor of $T_f$,  if $T_f$ is $\mathbb R$-supercyclic, then  $B_w$ is $\mathbb R$-supercyclic.\\
Hence, we only need to prove the converse. To this aim, the sufficient condition given in Proposition \ref{newcondition} will be used.
Assume  $B_w$ $\mathbb R$-supercyclic, meaning that the operator $T_g$ is $\mathbb R$-supercyclic.
Let $\epsilon >0$ and $N \in \N$. Consider $\tilde{\epsilon}=  \dfrac{1}{2} \displaystyle{\min_{\vert j \vert \leq N}}  \left \{\dfrac{\epsilon}{\mu(W)},  \nu(g^{j}(\{0\})) \right \}$ and let  
\[B=\bigcup_{\vert j \vert \leq N}  g^{j}(\{0\}). \]
Clearly, $0 < \nu(B) < \infty$. As $T_g$ is $\mathbb R$-supercyclic, then from Theorem \ref{generalcase} it follows that there exist $B' \subseteq B$, $k \geq 1$, $\lambda \in \mathbb R^+$, such that
 \[ \nu(B \setminus B')<\tilde{\epsilon}, \,\, \nu(g^{k}(B'))<{\tilde{\epsilon}}{ \lambda ^p}, \ \ \nu(g^{-k}(B'))<{\tilde{\epsilon}}{ \lambda^{-p}}. \]
It is not difficult to see that it must be $B' =B$. In fact, assume by contradiction $B' \neq B$, that is, there exists $\tilde{j}$, with $\vert \tilde{j} \vert \leq N$, such that $\tilde{j} \in B \setminus B'$. Then, 
\[  \min_{\vert j \vert \leq N} \{ \nu(g^{j}(\{0\}))\} \leq \nu(\{\tilde{j}\}) \leq \nu(B \setminus B') < \tilde{\epsilon} \]
which is impossible by the choice of $\tilde{\epsilon}$. Hence, it must be $B' = B$. \\
In particular, note that 
 \[\sum_{\vert j \vert \leq N} \nu(g^{k+j}(\{0\}))=\nu(g^{k}(\bigcup_{\vert j \vert \leq N}  g^{j}(\{0\})))=  \nu(g^{k}(B')) < {\tilde{\epsilon}}{\lambda ^p}<\dfrac{\epsilon \lambda ^p}{\mu(W)} \] 
and 
 \[\sum_{\vert j \vert \leq N} \nu(g^{-k+j}(\{0\}))=\nu(g^{-k}(\bigcup_{\vert j \vert \leq N}  g^{j}(\{0\})))=  \nu(g^{-k}(B')) < {\tilde{\epsilon}}{\lambda^{-p}}<\dfrac{\epsilon \lambda ^{-p}}{\mu(W)}. \] 
Then, it follows that
\begin{eqnarray*}
 \mu( f^{k}( \bigcup_{\vert j \vert \leq N} f^{j}(W) )) =\sum_{\vert j \vert \leq N} \frac{\mu(f^{k+j}(W))}{\mu(W)} {\mu(W)} &=& \sum_{\vert j \vert \leq N} \nu(\{k+j\}) {\mu(W)}\\
 &=& \sum_{\vert j \vert \leq N} \nu(g^{k+j}(\{0\})) {\mu(W)} \\
 &<&  \dfrac{\epsilon \lambda^p}{\mu(W)} {\mu(W)} 
  \end{eqnarray*}
and, analogously,
\begin{eqnarray*}
\mu( f^{-k}( \bigcup_{\vert j \vert \leq N} f^{j}(W) ))=\sum_{\vert j \vert \leq N} \frac{\mu(f^{-k+j}(W))}{\mu(W)}  {\mu(W)}  &=& \sum_{\vert j \vert \leq N} \nu(\{-k+j\}) {\mu(W)} \\
&=& \sum_{\vert j \vert \leq N} \nu(g^{-k+j}(\{0\})) {\mu(W)} \\
&<&  \dfrac{\epsilon \lambda^{-p}}{ \mu(W)} {\mu(W)} 
  \end{eqnarray*}
i.e.,
\begin{eqnarray*}
\mu( f^{k}( \bigcup_{\vert j \vert \leq N} f^{j}(W) )) <{\epsilon \lambda^p} \ \ \text{and} \ \ \mu( f^{-k}( \bigcup_{\vert j \vert \leq N} f^{j}(W) )) < {\epsilon \lambda^{-p}}.
 \end{eqnarray*}
By the arbitrariness of $\epsilon$ and $N$, using Proposition \ref{newcondition}, it follows that $T_f$ is ${\mathbb R}$-supercyclic. 
\end{proof}

\begin{cor}[$\mathbb C$-supercyclicity: shift-like behavior] \label{dissipativeC}
Let $(X,{\mathcal B},\mu, f, T_f)$ be a  dissipative composition dynamical system of bounded distortion generated by $W$.  The composition operator $T_f$ is $\mathbb C$-supercyclic if and only if the weighted shift $B_w$ given in (WS) is so.
\end{cor}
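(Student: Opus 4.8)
The plan is to obtain this statement essentially for free from Theorem~\ref{Rlike}, by combining it with two facts already recorded in Section~2: first, that for bilateral weighted backward shifts the notions of $\mathbb R^+$-, $\mathbb R$- and $\mathbb C$-supercyclicity all coincide; and second, that $\mathbb C$-supercyclicity is preserved under passing to factors. No genuinely new computation is needed; the proof is a short chain of implications.

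First I would handle the forward direction. Assume $T_f$ is $\mathbb C$-supercyclic. By Lemma~\ref{factorBw}, $B_w$ (with weights given by (WS)) is a factor of $T_f$, and since $\mathbb C$-supercyclicity is inherited by factors, $B_w$ is $\mathbb C$-supercyclic. (Equivalently, this is exactly the content of Remark~\ref{rm}.)

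For the converse, suppose $B_w$ is $\mathbb C$-supercyclic. Since $B_w$ is a bilateral weighted backward shift, the three supercyclicity notions are equivalent for it, so $B_w$ is in fact $\mathbb R$-supercyclic. Theorem~\ref{Rlike} then gives that $T_f$ is $\mathbb R$-supercyclic. Finally, $\mathbb R$-supercyclicity implies $\mathbb C$-supercyclicity, because the projective orbit $\{\lambda T_f^n x : n\geq 0,\ \lambda\in\mathbb R\}$ is contained in $\{\lambda T_f^n x : n\geq 0,\ \lambda\in\mathbb C\}$, so density of the former forces density of the latter. Hence $T_f$ is $\mathbb C$-supercyclic, completing the equivalence.

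There is no real obstacle here: every ingredient (preservation of supercyclicity by factors, the shift-like behavior for $\mathbb R$-supercyclicity in Theorem~\ref{Rlike}, and the collapse of the three variants for bilateral weighted shifts) has already been established or recalled in the paper. If a more self-contained argument were desired, one could replace the appeal to the collapse of the variants for shifts by noting that a bilateral weighted shift is supercyclic if and only if it satisfies the Supercyclicity Criterion, which in turn implies $\mathbb R$-supercyclicity; but this is precisely the discussion preceding Theorem~\ref{GS}, so citing it suffices.
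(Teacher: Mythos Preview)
Your proof is correct and follows essentially the same route as the paper's own argument: the forward direction via Remark~\ref{rm} (factor map preserves $\mathbb C$-supercyclicity), and the converse by passing from $\mathbb C$- to $\mathbb R$-supercyclicity on the shift side, applying Theorem~\ref{Rlike}, and then using that $\mathbb R$-supercyclicity implies $\mathbb C$-supercyclicity.
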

\begin{proof}
It is already mentioned in Remark \ref{rm} that $\mathbb C$-supercyclicity can be transferred from $T_f$ to $B_w$. For the other direction, note that $B_w$ is $\mathbb C$-supercyclic if and only if it is 
$\mathbb R$-supercyclic, implying, by Theorem \ref{Rlike}, that $T_f$ is $\mathbb R$-supercyclic and, hence, $\mathbb C$-supercyclic.
\end{proof}
In order to prove the characterization of $\mathbb C$-supercyclicity in the dissipative case, we use the following known result \cite[Corollary 1.39, Remark 1.41]{BayartMatheron}. 

\begin{thm}[Characterization of $\mathbb C$-supercyclicity for $B_{w}$] \label{thmBwC}
Let $B_{w}$ be a bilateral  weighted backward shift on $l^{p}({\mathbb Z})$, $1 \leq p < \infty$, with weight sequence ${\bf w} = (w_{n})_{n \in {\mathbb Z}}$. Then, $B_{w}$ is supercyclic if and only if, for any $q \in {\mathbb N}$, 
\[\lowlim_{n \rightarrow +\infty}  {\left (w_{1} \cdots w_{n+q} \right)}^{-1} \cdot \left (w_{0} \cdots w_{-n+q +1}\right)  =0.\] 
\end{thm}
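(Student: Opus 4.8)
The plan is to establish the two implications separately, using Theorem~\ref{super} for necessity and the Supercyclicity Criterion for sufficiency. First I would record the elementary identities, valid because the weights are bounded and bounded away from $0$ (so that $B_w$ is invertible): for $n\ge 1$, $B_w^{n}e_{j} = (w_{j}w_{j-1}\cdots w_{j-n+1})\,e_{j-n}$ and $B_w^{-n}e_{j} = (w_{j+1}\cdots w_{j+n})^{-1}e_{j+n}$, equivalently $(B_w^{n}x)(i)=w_{i+1}\cdots w_{i+n}\,x(i+n)$. Writing $P(n)=w_{1}\cdots w_{n}$ and $Q(n)=w_{0}w_{-1}\cdots w_{-n+1}$, the quantity appearing in the statement for the value $q$ equals $\|B_w^{-(n+q)}e_{0}\|\cdot\|B_w^{\,n-q}e_{0}\|$, and for $q=0$ it is $P(n)^{-1}Q(n)$. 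I would also observe at the outset that the hypothesis ``for all $q\in\N$'' is equivalent to its instance $q=0$: passing from $q$ to $q'$ multiplies the expression by a product of at most $2|q-q'|$ weights (or their reciprocals), hence by a factor lying in a fixed compact subinterval of $(0,+\infty)$, so one $\liminf$ vanishes iff they all do. Thus it is enough to characterize $\C$-supercyclicity by the condition $\liminf_{n}P(n)^{-1}Q(n)=0$.

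For sufficiency, assume $\liminf_{n}P(n)^{-1}Q(n)=0$ and fix a strictly increasing sequence $(n_{k})$ with $P(n_{k})^{-1}Q(n_{k})\to 0$. For fixed $j,j'\in\Z$ one has $\|B_w^{\,n_{k}}e_{j'}\|\cdot\|B_w^{-n_{k}}e_{j}\| = \dfrac{w_{j'-n_{k}+1}\cdots w_{j'}}{w_{j+1}\cdots w_{j+n_{k}}}$, which (for $n_{k}$ large) differs from $P(n_{k})^{-1}Q(n_{k})$ only by a bounded product of weights over a symmetric-difference index set, so it is bounded by a constant depending on $|j|+|j'|$ times $P(n_{k})^{-1}Q(n_{k})$, hence tends to $0$. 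Now apply the Supercyclicity Criterion with $X_{0}=Y_{0}$ the dense space of finitely supported sequences, $S=B_w^{-1}$ (which maps $Y_{0}$ into itself and satisfies $B_wS=I$), and the sequence $(n_{k})$: expanding $x=\sum_{j'}a_{j'}e_{j'}$, $y=\sum_{j}b_{j}e_{j}$ over finite index sets and using the triangle inequality, $\|B_w^{\,n_{k}}x\|\,\|S^{n_{k}}y\|$ is dominated by the finite sum $\sum_{j',j}|a_{j'}||b_{j}|\,\|B_w^{\,n_{k}}e_{j'}\|\,\|B_w^{-n_{k}}e_{j}\|$, each summand going to $0$. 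So condition (a) holds and $B_w$ is $\C$-supercyclic.

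For necessity, assume $B_w$ is $\C$-supercyclic; given $\epsilon>0$ and $M\in\N$ I want to produce some $n>M$ with $P(n)^{-1}Q(n)<\epsilon$. Put $c_{M}=\min_{1\le n\le M}P(n)^{-1}Q(n)>0$, fix an integer $K>M$, and choose $\delta,\delta'\in(0,\frac{1}{4})$ so small that $6\delta<\epsilon$ and $4\delta\delta'<\min(\epsilon,c_{M})$. Apply Theorem~\ref{super} to the balls $U=B_{\delta'}(e_{0}+e_{K})$ and $V=B_{\delta}(e_{0})$: there exist $n\ge 0$, $\lambda\in\C$ and $u\in U$ with $\lambda B_w^{n}u\in V$. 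Comparing coordinates of $\lambda B_w^{n}u$: its value at $0$ is $\lambda P(n)u(n)$, of modulus $>1-\delta$; its value at $-n$ (when $n\ge 1$) is $\lambda Q(n)u(0)$, of modulus $<\delta$. Since $|u(0)|,|u(K)|$ lie within $\delta'$ of $1$ and $|u(m)|<\delta'$ for $m\notin\{0,K\}$, one deduces: $n=0$ is impossible (it would force $|\lambda|\gtrsim 1$ from the coordinate $K$ and $|\lambda|\lesssim\delta$ from the coordinate $0$, incompatibly); if $1\le n\le M$ then dividing the two inequalities yields $P(n)^{-1}Q(n)<4\delta\delta'<c_{M}$, contradicting the definition of $c_{M}$; if $n=K$ then the same comparison (with the coordinate $0$ now fed by $u(K)$) gives $P(K)^{-1}Q(K)<6\delta<\epsilon$; and if $n>M$ with $n\notin\{0,K\}$ then $P(n)^{-1}Q(n)<4\delta\delta'<\epsilon$. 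In every admissible case we obtain some $n>M$ with $P(n)^{-1}Q(n)<\epsilon$, i.e. $\liminf_{n}P(n)^{-1}Q(n)=0$.

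The coordinate bookkeeping in both directions is routine. The one genuinely delicate point is the design of the test sets in the necessity argument: the source $U$ must be a two-bump vector $e_{0}+e_{K}$ with $K>M$ so that, together with the quantitative gap $c_{M}$, no power $n\le M$ can map $U$ projectively near $e_{0}$ unless $P(n)^{-1}Q(n)$ is already impossibly small; and then the single scalar $\lambda$ has exactly one degree of freedom, which is ``used up'' balancing the forward weight $P(n)$ against the backward weight $Q(n)$ — this is the structural reason why $\C$-supercyclicity only feels the product $P(n)^{-1}Q(n)$, whereas hypercyclicity would require the two factors to be controlled separately (compare the Hypercyclicity Criterion).
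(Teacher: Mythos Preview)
The paper does not prove Theorem~\ref{thmBwC}: it is quoted as a known result from \cite[Corollary~1.39, Remark~1.41]{BayartMatheron} and used as a black box in Theorem~\ref{thmdissipC}. Your write-up therefore supplies what the paper omits, following the classical Salas route (Supercyclicity Criterion for sufficiency; a two-bump test vector $e_{0}+e_{K}$ fed into Theorem~\ref{super} for necessity). Under your explicit standing hypothesis that the weights are bounded and bounded away from $0$ (i.e.\ $B_w$ invertible), the argument is correct; the small slip in the $n=0$ case --- the roles of the coordinates $0$ and $K$ are interchanged in your sentence --- is harmless.

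One caveat worth recording: both your reduction of the ``for all $q$'' hypothesis to its instance $q=0$, and the bound $\|B_w^{\,n_k}e_{j'}\|\,\|B_w^{-n_k}e_{j}\|\le C_{j,j'}\,P(n_k)^{-1}Q(n_k)$ in the sufficiency step, require controlling products of weights sitting at indices near $\pm n_k$ (not at fixed positions), so the lower bound on the weights is genuinely used there. Hence your proof, as written, establishes only the invertible case. That is exactly the case the paper needs --- the shift (WS) arising from a dissipative system of bounded distortion is always invertible --- and it is the situation isolated in the Remark immediately following Theorem~\ref{thmBwC}; but the theorem as stated (and as formulated in \cite{BayartMatheron}) does not assume invertibility, and in that generality the ``for all $q$'' condition cannot be collapsed to $q=0$, so the sufficiency argument has to be organised with a little more care.
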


\begin{thm}[Characterization of $\mathbb C$-supercyclicity - Dissipative case] \label{thmdissipC}
Let $(X,{\mathcal B},\mu, f, T_f)$ be a  dissipative composition dynamical system of bounded distortion generated by $W$.  The composition operator $T_f$ is $\mathbb C$-supercyclic if and only if, for any $q \in \N$, 
\[\lowlim_{n \rightarrow +\infty} \left ( \mu(f^{q-n}(W)) \cdot  \mu(f^{q+n}(W)) \right )=0.\] 
\end{thm}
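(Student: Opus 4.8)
The plan is to reduce the statement to the weighted-shift case via the shift-like behavior already established, and then to unwind the weighted-shift criterion by an explicit telescoping computation. By Corollary~\ref{dissipativeC}, $T_f$ is $\mathbb C$-supercyclic if and only if the bilateral weighted backward shift $B_w$ with weights $(w_k)_{k\in\Z}$ given in (WS) is $\mathbb C$-supercyclic; and by Theorem~\ref{thmBwC} the latter holds if and only if, for every $q\in\N$,
\[
\lowlim_{n\to+\infty}\,\bigl(w_{1}\cdots w_{n+q}\bigr)^{-1}\,\bigl(w_{0}\cdots w_{-n+q+1}\bigr)=0.
\]
Thus it suffices to rewrite the expression inside this lower limit in terms of the numbers $\mu(f^{j}(W))$.

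Next I would compute the two partial products using the telescoping built into (WS). Writing $w_k=\bigl(\mu(f^{k-1}(W))/\mu(f^{k}(W))\bigr)^{1/p}$, for every $m\ge 1$ the product $w_{1}\cdots w_{m}$ telescopes to $\bigl(\mu(W)/\mu(f^{m}(W))\bigr)^{1/p}$, hence $\bigl(w_{1}\cdots w_{n+q}\bigr)^{-1}=\bigl(\mu(f^{n+q}(W))/\mu(W)\bigr)^{1/p}$. Similarly, for $n$ large enough that $-n+q+1\le 0$, the descending product $w_{0}\,w_{-1}\cdots w_{-n+q+1}$ telescopes to $\bigl(\mu(f^{q-n}(W))/\mu(W)\bigr)^{1/p}$. (Alternatively one may invoke $\nu(i)=\mu(f^{i}(W))/\mu(W)$ and $w_i=(\nu(i-1)/\nu(i))^{1/p}$ from Proposition~\ref{Tg}.) Multiplying and recalling that $0<\mu(W)<\infty$ is a fixed constant, since $W$ generates the dissipative system, gives
\[
\bigl(w_{1}\cdots w_{n+q}\bigr)^{-1}\,\bigl(w_{0}\cdots w_{-n+q+1}\bigr)=\left(\frac{\mu(f^{q-n}(W))\,\mu(f^{q+n}(W))}{\mu(W)^{2}}\right)^{1/p}.
\]

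Finally, since $t\mapsto \bigl(t/\mu(W)^{2}\bigr)^{1/p}$ is a homeomorphism of $[0,\infty)$ fixing $0$, the lower limit of the left-hand side is $0$ if and only if $\lowlim_{n\to+\infty}\mu(f^{q-n}(W))\,\mu(f^{q+n}(W))=0$; quantifying over $q\in\N$ and combining with the two equivalences of the first paragraph yields exactly the asserted characterization. The only points requiring any care are keeping track of the index range in the backward product $w_{0}\cdots w_{-n+q+1}$ (harmless, as only large $n$ matters for the lower limit) and recording that the bounded distortion hypothesis is what licenses the use of Corollary~\ref{dissipativeC}; beyond that the argument is the routine telescoping above, so I do not anticipate a genuine obstacle.
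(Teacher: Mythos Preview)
Your proof is correct and follows exactly the paper's approach: reduce to $B_w$ via Corollary~\ref{dissipativeC} and then invoke Theorem~\ref{thmBwC}. The paper's proof is actually terser, simply asserting that ``the thesis follows from Theorem~\ref{thmBwC}'' after the reduction, whereas you have written out the telescoping computation and the homeomorphism remark explicitly; this is a welcome addition, not a divergence.
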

\begin{proof}
By Corollary \ref{dissipativeC},  the operator $T_f$ is $\mathbb C$-supercyclic if and only if the weighted shift $B_w$, with weights $w_{k} =  \left( \frac{\mu(f^{k-1}(W))}{\mu(f^{k}(W))}\right)^{\frac{1}{p}}$, $k \in \Z,$ is so. 
Then, the thesis follows from Theorem \ref{thmBwC}.
\end{proof}

\begin{rmk} As pointed out in \cite[Remark page 20]{BayartMatheron}, when the shift $B_w$ is invertible, the characterization in Theorem \ref{thmBwC} can be stated in
a simpler way: $B_{w}$ is supercyclic if and only if
\[\lowlim_{n \rightarrow +\infty}  {\left (w_{1} \cdots w_{n} \right)}^{-1} \cdot \left (w_{-1} \cdots w_{-n}\right)  =0.\] 
Indeed, since the weights $w_n$ are bounded above and below, the products ${(w_{1}  \cdots w_{n+q})}^{-1}$ and $(w_{0} \cdots w_{-n+q+1})$ are equivalent to ${(w_1 \cdots w_{n})}^{-1}$  and $(w_{-1} \cdots  w_{-n})$, respectively, 
up to constants depending only on $q$.\\
Hence, also, the characterization in Theorem \ref{thmdissipC} can be stated in
a simpler way: 
$T_f$ is $\mathbb C$-supercyclic if and only if
\[\lowlim_{n \rightarrow +\infty} \left ( \mu(f^{-n}(W)) \cdot  \mu(f^{n}(W)) \right )=0 \tag{SC} .\] 
\end{rmk}

\begin{exmp}
Let $X = {\mathbb R}$, ${\mathcal B}$ the collection of Borel subsets of ${\mathbb R}$, and $f(x) = x+1$. For any measure $\mu$ on  ${\mathbb R}$ with $0<\mu(W) < \infty$, $W=[0,1),$ we get a  dissipative system generated by $W$. As in \cite{DAnielloDarjiMaiuriello}, take $\mu$ to be given by a density, i.e.,
\[\mu(B) = \int _B h d\lambda,\]
where $\lambda$ is the Lebesgue measure on ${\mathbb R}$ and $h$ is some non-negative Lebesgue measurable function. 
As \[ \frac{d (\mu f^k)}{d\lambda} = \frac{d (\mu f^k)}{d\mu} \cdot  \frac{d\mu }{d\lambda} 
\]
and 
\[ \frac{d (\mu f^k)}{d\lambda} (x) = h(x+k), \ \ \ \ \ \ \  \ \ \ \ \ \frac{d\mu }{d\lambda} (x) = h(x),
\]
we have that \[ \frac{d (\mu f^k)}{d\mu} (x) = \frac{h(x+k)}{h(x)}.
\]
Let, now, $\mu$ be the measure whose density is $h(x)$ so defined 

\[h(x)= \left\{ \begin{array}{ll}
e^{2x}  & \mbox{ if } x \in ]-\infty, 0]\\
e^{x}   & \mbox{ if } x \in [0, + \infty[ \\
\end{array}
\right.\] 
Hence, if $x \in [0, 1[$, then 
 \[\frac{d (\mu f^k)}{d\mu} (x)= \left\{ \begin{array}{ll}
\frac{e^{x+k}}{e^{x}} = e^{k}  & \mbox{ if }  k \in {\mathbb N}\\
\frac{e^{2(x+k)}}{e^x}= e^{x+2k}     & \mbox{ if } -k \in {\mathbb N}\\
\end{array}
\right.\] 
Let $\rho_k= \frac{d (\mu f^k)}{d\mu}$, $m_{k} = \underset{ x \in W}{\mathrm{ess\,inf}} \  {\rho}_{k} (x)
$ and $M_{k} = \underset{ x \in W}{\mathrm{ess\,sup}} \  {\rho}_{k} (x) $. Then
\[ \frac{M_{k}}{m_{k}} = \left\{ \begin{array}{ll}
1 & \mbox{ if }  k \geq 0\\
e    & \mbox{ if } k \leq  -1\\
\end{array}
\right.\] 
Hence, the sequence $\{\frac{M_k}{m_k}\}_{k \in {\mathbb Z}}$ is bounded.  By \cite[Proposition 2.6.6]{DAnielloDarjiMaiuriello}, the above dissipative system satisfies the bounded distortion property. 
Notice that
\[ \mu(W)= e - 1,\]
and, for each $n \geq 1$, 
\[ \mu(f^n(W))= \int _n^{n+1} h(x) dx=  \int _n^{n+1} e^{x} dx =e^{n}(e -1)\]
and
\[ \mu(f^{-n}(W))= \int _{-n}^{-n+1} h(x) dx= \int _{-n}^{-n+1} e^{2x} dx =\frac{1}{2}e^{-2n}(e^{2}-1).\]
Then, for each $n \in {\mathbb N}$, 
\[0 \leq \mu(f^{-n}(W)) \cdot \mu(f^{n}(W))  \leq \frac{1}{2}(e+1){(e-1)}^{2}  e^{-n} \] 
 implying, by {\em{(SC)}}, that $T_f$ is $\mathbb C$-supercyclic.
\end{exmp}

{\begin{small}
{\bf Acknowledgments.} {The authors wish to thank the referee for the careful reading of the paper and the useful suggestions. This research has been partially supported by the following projects. The project COSYMA: COmplex SYstem MAintenance; the INdAM group GNAMPA ``Gruppo Nazionale per l'Analisi Matematica, la Probabilit\'a e le loro Applicazioni" within the project 2024 DYNAMIChE: DYNAmical Methods: Inverse problems, Chaos and Evolution; the project PRIN 2022 QNT4GREEN: Quantitative Approaches for Green Bond Market: Risk Assessment, Agency Problems and Policy Incentives. It has been partially accomplished within the group UMI-TAA: Approximation Theory and Applications.}
\end{small}}

\bibliographystyle{siam}
\bibliography{biblio}

\begin{thebibliography}{10}

\bibitem{BADP}
{\sc F.~Bayart, U.~B. Darji, and B.~Pires}, {\em Topological transitivity and
  mixing of composition operators}, J. Math. Anal. Appl., 465 (2018),
  pp.~125--139.

\bibitem{BayartMatheron}
{\sc F.~Bayart and E.~Matheron}, {\em Dynamics of linear operators}, vol.~179
  of Cambridge Tracts in Mathematics, Cambridge University Press, Cambridge,
  2009.

\bibitem{Bayartruzsa}
{\sc F.~Bayart and I.~Z. Ruzsa}, {\em Difference sets and frequently
  hypercyclic weighted shifts}, Ergodic Theory Dynam. Systems, 35 (2015),
  pp.~691--709.

\bibitem{bermudez2002c}
{\sc T.~Berm{\'u}dez, A.~Bonilla, and A.~Peris}, {\em {$\Bbb C$}-supercyclic
  versus {$\Bbb R$}-supercyclic operators}, Archiv der Mathematik, 79 (2002),
  pp.~125--130.

\bibitem{BDP}
{\sc N.~C. Bernardes, Jr., U.~B. Darji, and B.~Pires}, {\em Li-{Y}orke chaos
  for composition operators on {$L^p$}-spaces}, Monatsh. Math., 191 (2020),
  pp.~13--35.

\bibitem{Birkhoff}
{\sc G.~D. Birkhoff}, {\em D\'{e}monstration du dernier th\'{e}or\`eme de
  g\'{e}om\'{e}trie de {P}oincar\'{e}}, Bull. Soc. Math. France, 42 (1914),
  pp.~1--12.

\bibitem{birkhoff1922}
{\sc G.~D. Birkhoff}, {\em Surface transformations and their dynamical
  applications}, Acta Math, 43 (1922), pp.~1--119.

\bibitem{CHARPENTIER20164443}
{\sc S.~Charpentier, R.~Ernst, and Q.~Menet}, {\em $\gamma$-supercyclicity},
  Journal of Functional Analysis, 270 (2016), pp.~4443--4465.

\bibitem{DAnielloDarjiMaiuriello}
{\sc E.~D'Aniello, U.~B. Darji, and M.~Maiuriello}, {\em Generalized
  hyperbolicity and shadowing in {$L^p$} spaces}, J. Differential Equations,
  298 (2021), pp.~68--94.

\bibitem{DAnielloDarjiMaiuriello2}
\leavevmode\vrule height 2pt depth -1.6pt width 23pt, {\em Shift-like operators
  on {$L^p(X)$}}, J. Math. Anal. Appl., 515 (2022), pp.~Paper No. 126393, 13.

\bibitem{DAnielloMaiurielloSpectrum}
{\sc E.~D'Aniello and M.~Maiuriello}, {\em On the spectrum of weighted shifts},
  Rev. R. Acad. Cienc. Exactas F\'{\i}s. Nat. Ser. A Mat. RACSAM, 117 (2023),
  p.~Paper No. 4.

\bibitem{Darjipires}
{\sc U.~B. Darji and B.~Pires}, {\em Chaos and frequent hypercyclicity for
  composition operators}, Proc. Edinb. Math. Soc. (2), 64 (2021), pp.~513--531.

\bibitem{Shapiro}
{\sc R.~M. Gethner and J.~H. Shapiro}, {\em Universal vectors for operators on
  spaces of holomorphic functions}, Proc. Amer. Math. Soc., 100 (1987),
  pp.~281--288.

\bibitem{GrossePapathanasiou}
{\sc K.-G. Grosse-Erdmann and D.~Papathanasiou}, {\em Dynamics of weighted
  shifts on directed trees}, Indiana Univ. Math. J., 72 (2023), pp.~263--299.

\bibitem{GE}
{\sc K.-G. Grosse-Erdmann and A.~Peris~Manguillot}, {\em Linear chaos},
  Universitext, Springer, London, 2011.

\bibitem{halmos}
{\sc P.~R. Halmos}, {\em A Hilbert space problem book}, vol.~19, Springer
  Science \& Business Media, 2012.

\bibitem{herzog1992linear}
{\sc G.~Herzog}, {\em On linear operators having supercyclic vectors}, Studia
  Mathematica, 103 (1992), pp.~295--298.

\bibitem{hilden}
{\sc H.~Hilden and L.~Wallen}, {\em Some cyclic and non-cyclic vectors of
  certain operators}, Indiana University Mathematics Journal, 23 (1974),
  pp.~557--565.

\bibitem{Kalmes}
{\sc T.~Kalmes}, {\em Hypercyclic, mixing, and chaotic {$C_0$}-semigroups
  induced by semiflows}, Ergodic Theory Dynam. Systems, 27 (2007),
  pp.~1599--1631.

\bibitem{Kitai}
{\sc C.~Kitai}, {\em Invariant closed sets for linear operators}, ProQuest LLC,
  Ann Arbor, MI, 1982.
\newblock Thesis (Ph.D.)--University of Toronto (Canada).

\bibitem{leon2001spectral}
{\sc F.~Le{\'o}n-Saavedra and A.~Montes-Rodr{\'\i}guez}, {\em Spectral theory
  and hypercyclic subspaces}, Transactions of the American Mathematical
  Society, 353 (2001), pp.~247--267.

\bibitem{leventides2022analysis}
{\sc J.~Leventides, E.~Melas, C.~Poulios, and P.~Boufounou}, {\em Analysis of
  chaotic economic models through {K}oopman operators, {EDMD}, {T}akens’
  theorem and machine learning}, Data Science in Finance and Economics, 2
  (2022), pp.~416--436.

\bibitem{Maiuriello}
{\sc M.~Maiuriello}, {\em Dynamics of Linear Operators}, Aracne, ISBN:
  979–12–218–0131–6, 2022.

\bibitem{S}
{\sc H.~N. Salas}, {\em Hypercyclic weighted shifts}, Trans. Amer. Math. Soc.,
  347 (1995), pp.~993--1004.

\bibitem{Salas}
\leavevmode\vrule height 2pt depth -1.6pt width 23pt, {\em Supercyclic and
  weighted shifts}, Studia Math., 135 (1999), pp.~55--74.

\end{thebibliography}

\Addresses

\end{document}